\theoremstyle{plain}
\newtheorem{theorem}{Theorem}[section]
\newtheorem{lemma}[theorem]{Lemma}
\newtheorem{corollary}[theorem]{Corollary}
\theoremstyle{definition}
\newtheorem{definition}[theorem]{Definition}
\let\etoolboxforlistloop\forlistloop 
\let\forlistloop\etoolboxforlistloop 
\DeclareMathOperator{\diam}{diam}
\DeclareMathOperator{\chr}{char}
\begin{document}


\title{
	On the characteristic and diameter of planar integral point sets
	\footnote{
		The work of the first author was partially supported by the Theoretical Physics
		and Mathematics Advancement Foundation “BASIS”.
	}
}

\author{
	N.N. Avdeev
	\footnote{nickkolok@mail.ru, avdeev@math.vsu.ru},
	E.A. Lushina
	\\
	\textit{Voronezh State University}
}

\maketitle

\paragraph{Abstract.}
A point set $M$ in Euclidean plane is called an integral point set in semi-general position if all the distances between the
elements of $M$ are integers, and $M$ does not contain collinear triples.
We improve the lower bound for diameter of such sets in the particular case
when the characteristic of the set is of the form $4k+1$ or $4k+2$.
To achieve that, we combine hyperbolae-based and grid-based toolsets.

\section{Introduction}
	A planar integral point set (IPS) is a set of points in the plane, such that the distance between any pair of its points is an integer, and at least one triple of its points is non-collinear. The latter condition is essential to avoid subsets of a straight line; those are de-facto equivalent to subsets of integer numbers and form a completely different combinatorial object.

	In 1945, Erdös gave an elegant proof~\cite{anning1945integral,erdos1945integral} that every IPS is finite.
	In any IPS he chose a non-collinear triple $\{M_1, M_2, M_3\} \in M$ so that any other point $M_0 \in M$ lies on either the straight line $M_1M_2$, the perpendicular bisector to the segment $M_1M_2$, or one of $|M_1M_2|-1$ hyperbolae, where $|M_1M_2|$ stands for length of line segment $M_1M_2$.  Applying the same argument to the line segment $M_1M_3$, Erdös concluded that $\#M \le 4 \cdot |M_1M_2| \cdot |M_1M_3|$, where $\#M$ stands for cardinality of $M$.

	Thus, we can easily infer the lower bound for diameter of an IPS $M$:
	$$
		\operatorname{diam} M \ge \frac{\sqrt{\#M}}{2}.
	$$
	That was the first lower bound for the diameter ever;
	however, Erdös did not only estimate the diameter,
	but also established the toolset for further investigation, the core of which is a system of cofocal hyperbolae.

	Using this toolset, in 2003 Solymosi proved~\cite{solymosi2003note} that
	$$
		\operatorname{diam} M \ge c \cdot \#M.
	$$
	Although Solymosi did not give the constant $c$ explicitly,
	it can be inferred~\cite{our-vmmsh-2018-translit} from his proof that $c=\frac{1}{24}$.

	In~\cite{our-vmmsh-2018-translit} the constant (for $n\geq 4$) was improved to $0.3457$
	employing Point Packing in a Square Problem~\cite{markot2005newverified,costa2013valid}.
	In~\cite{my-pps-linear-bound-2019} the approach has been further developed, and the constant has been tightened to $\frac{5}{11}$.
	Finally, in~\cite{my-semi-general-5-4-bound-2019} for IPS $M$ in semi-general position (that is an IPS with no collinear triples) it was proved that
	\begin{equation}
		\label{eq:chebsb_lower_bound}
		\diam M \geq \left(\frac n 5 \right)^{5/4}
	\end{equation}
	(in assumption that the set has at east 4 points).
	All these lower bounds are based on the Erdos's framework --- cofocal hyperbolae.

	Meanwhile in 1988 Kemnitz introduced~\cite{kemnitz1988punktmengen}
	a characteristic of an IPS, that is a squarefree integer $q$ such as the area
	of a triangle formed by any triple of points from the IPS is comparable with $\sqrt{q}$.
	(Indeed, Kemnitz proved this fact for any point set with rational distances ---
	in contrast with IPS, those can be infinite even if they contain non-collinear triples,
	see~\cite{huff1948diophantine} for an example construction
	and~\cite{solymosi2010question} for some known limitations.)
	In 2000s, Kurz introduced~\cite{kurz2005characteristic} the function $d(2,n)$ that evaluates
	to the minimal possible diameter of planar IPS of cardinality $n$.
	Then Kurz employed Kemnitz's results and found~\cite[Subsection 4.2]{kurz2008minimum} exact values of $d(2,n)$
	up to $n=122$ by exhaustive computer search.
	(Taking the characteristic into consideration allows to boost such search significantly ---
	basically because all triples of an IPS form triangles with equal characteristic.)

	For generalization in higher dimensions, we refer the reader to~\cite{nozaki2013lower}.

	In the present paper, we put the power of Erdös's and Kemnitz's approaches together. Due to squarefree nature of characteristic, it can be of the form $4k+1$, $4k+2$ or $4k+3$, where $k$ is integer.
	We prove that for a particular case of characteristic $4k+1$ and $4k+2$
	the bound~\eqref{eq:chebsb_lower_bound} can be improved to
	\begin{equation}
		\operatorname{diam} M \ge \left( \frac{25}{36} n \right)^{5/4}
		.
	\end{equation}

	In Section 2, we give all the required notions and known results. In Section 3, we discuss some examples of integral point sets in order to demonstrate that none of the classes $4k+1$, $4k+2$, $4k+3$ is too exotic nor pathological. Section 4 is devoted to the connection between Erdös curves and characteristic. In Section 5, we prove some auxiliary results, and we proceed to the main one in Section 6.

\section{Basic Notions and Results}

In this Section, we provide rigorous definitions and list some known results.

For the sake of brevity, the following notations will be used for sets
of positive integers, non-negative integers and all the integers resp.:
\begin{equation}
	\mathbb{N} = \{1,2,3,4,...\},\quad \mathbb{N}_0 = \mathbb{N} \cup \{0\},
	\quad
	\mathbb{N}_\pm = \{0,\pm 1,\pm 2,\pm 3,\pm 4,...\}
\end{equation}

For a finite set $M$, we will denote its cardinality by $\#M$.

\begin{definition}
	\label{def:IPS}
	A planar integral point set (IPS) is a set $M$
	of non-collinear points in the plane $\mathbb{R}^{2}$ such that
	for any pair of points $M_{1}, M_{2} \in M$
	the Euclidean distance $|M_{1}M_{2}|$
	between points $M_{1}$ and $M_{2}$ is integral.
	Notation: $M\in\mathfrak{M}$, and also $M\in\mathfrak{M}_n$ for $n=\#M$.
\end{definition}

When we say that a set in non-collinear,
we mean that it has at least one non-collinear triple.
If we tighten the condition and require all the triples to be non-collinear,
we get the next definition.

\begin{definition}
	\label{def:IPS_semi_general}
	A planar IPS $M$ is said to be in \emph{semi-general position}
	if no three points of $M$ are collinear.
	Notation: $M\in\dot{\mathfrak{M}}$, and also $M\in\dot{\mathfrak{M}}_n$ for $n=\#M$.
\end{definition}

In the present paper, we mostly focus on IPS in semi-general position.
However, the next restriction step can be done as the following.

\begin{definition}
	\label{def:IPS_general}
	A planar IPS $M$ is said to be in \emph{general position}
	if no three points of $M$ are collinear
	and no four points of $M$ are concircular.
	Notation: $M\in\overline{\mathfrak{M}}$, and also $M\in\overline{\mathfrak{M}}_n$ for $n=\#M$.
\end{definition}

\begin{definition}
	The diameter of an integral point set $M$ is defined by setting
	\begin{equation}
		\operatorname{diam} M = \underset{M_{1}, M_{2} \in
		M}{\max} |M_{1}M_{2}|
		.
	\end{equation}
\end{definition}

\begin{definition}
	Two numbers $a$ and $b$ are \emph{commensurable} if their ratio $a/b$
	is a rational number.
\end{definition}

For example, the pair $(7, 2/3)$ is commensurable,
the pair $(\sqrt3/3, \sqrt{12})$ is also commensurable,
but the pair $(\sqrt2, \sqrt3)$ is not.

\begin{definition}
	A number is called \emph{squarefree} if its only perfect square divisor is 1.
	The first squarefree numbers are: 1, 2, 3, 5, 6, 7, 10, 11, 13, ...
\end{definition}

\begin{definition}
	\label{def:char}
	The characteristic of a planar IPS $\mathcal{M}$ is a squarefree number $q$
	such that the area of any triangle $M_1M_2M_3$, $\{M_1,M_2,M_3\}\subset M$,
	is commensurable with $\sqrt{q}$.
	Notation: $\chr M = q$.
\end{definition}

\begin{definition}
	If points $M_1, M_2 \in M \in \mathfrak{M}$,
	then the line segment $M_1 M_2$ is said to be an \emph{edge} of $M$.
\end{definition}

The following result is due to Kemnitz~\cite{kemnitz1988punktmengen}:

\begin{theorem}[the Grid Theorem]
	A set $M \in \mathfrak{M}_n$ with characteristic $p$ can be placed on the grid
	$$
	\left\{\left(\frac{a_i}{2m} ; \frac{b_i \sqrt{p}}{2m}\right)\right\},
	$$
	where $a_i, b_i \in \mathbb{N}_\pm$, and $m$ can be taken as the length of any edge of the set $M$.
	\label{tm:1}
\end{theorem}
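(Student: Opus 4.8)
The plan is to follow Kemnitz's original argument, which is essentially a coordinate calculation grounded in the law of cosines. First I would fix an edge $M_1M_2$ of length $m$ and set up a Cartesian coordinate system with $M_1$ at the origin and $M_2$ at $(m,0)$. For any third point $M_0 \in M$, write $M_0 = (x,y)$; the two distances $r_1 = |M_0M_1|$ and $r_2 = |M_0M_2|$ are integers, and $x^2 + y^2 = r_1^2$, $(x-m)^2 + y^2 = r_2^2$. Subtracting gives $x = \frac{r_1^2 - r_2^2 + m^2}{2m}$, which is a rational number with denominator dividing $2m$; hence $x = a/(2m)$ for some integer $a$. This disposes of the first coordinate immediately.

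For the second coordinate, $y^2 = r_1^2 - x^2$ is a rational number, and $y^2 \cdot (2m)^2 = (2mr_1)^2 - a^2$ is an integer, so $y = \sqrt{D}/(2m)$ for some nonnegative integer $D = (2mr_1)^2 - a^2$. The crux is to show $D$ has the form $b^2 p$ with $p$ the characteristic of $M$, i.e. that $\sqrt{D}$ is commensurable with $\sqrt{p}$. Here I would invoke the defining property of the characteristic (Definition~\ref{def:char}): the area of triangle $M_0M_1M_2$ equals $\tfrac12 m \cdot |y| = \tfrac{|y| m}{2}$, and this is commensurable with $\sqrt{p}$. Since $m$ is rational (an integer), $|y|$ itself is commensurable with $\sqrt{p}$, so $y = \frac{b\sqrt p}{2m}$ for some $b \in \mathbb{Z}$ once we absorb the rational commensurability factor and the $2m$ denominator. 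Thus every point $M_0$, including $M_1$ and $M_2$ themselves (with $b = 0$), lands on the claimed grid.

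The one genuine subtlety — and the step I expect to be the main obstacle to write cleanly — is justifying that the characteristic is well-defined and that the \emph{same} squarefree $p$ works simultaneously for every point and every triple, rather than a priori a different $\sqrt{q}$ per triangle. This is precisely the content of Kemnitz's theorem that a planar IPS (indeed any rational-distance non-collinear set) admits a characteristic: one shows that for any two triangles sharing vertices in $M$, the ratio of their areas is rational, so all areas are commensurate with a single $\sqrt p$. Given that fact — which we are entitled to assume, as it is the cited result of Kemnitz underlying Definition~\ref{def:char} — the rest is the elementary coordinate computation above. I would therefore structure the proof as: (i) reduce to coordinates on a chosen edge; (ii) extract $x = a_i/(2m)$ from the difference of two distance equations; (iii) extract $y = b_i\sqrt p/(2m)$ from the area–characteristic commensurability; (iv) note the construction is independent of which point we started with, so all of $M$ sits on the grid, and remark that any edge length $m$ may be used since the argument only used $m \in \mathbb{N}$.
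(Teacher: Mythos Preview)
Your proposal is correct and follows essentially the same approach as the paper's (sketched) proof: place the chosen edge on the $x$-axis, subtract the two circle equations to obtain $x = a_i/(2m)$, and then extract the $\sqrt{p}$ factor in $y$ from the area--characteristic commensurability. Your write-up is in fact more explicit than the paper's sketch about why the squarefree $p$ appears in the second coordinate, and the only cosmetic difference is your placement of $M_1$ at the origin rather than at $(-m/2,0)$.
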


For the sake of completeness, we sketch the proof for the Grid Theorem below.

\begin{proof}
	Let $M_1, M_2 \in M \in \mathfrak M$ and $|M_1 M_2| = m$.
	Set $M_1=(-m/2, 0)$, $M_2=(m/2, 0)$.
	Then for any $M_i \in M$, $M_i = (x,y)$ one has
	$|M_i M_1| = k \in \mathbb N_0$, $|M_i M_2| = n \in \mathbb N_0$.
	The point $M_i$ belongs to the intersection of two circles,
	whose equations are
	\begin{gather}
		\left(-\frac{m}{2} - x\right)^2 + y ^2 = k^2,
	\\
		\left( \frac{m}{2} - x\right)^2 + y ^2 = n^2,
	\end{gather}
	where $k+n\geq m$.

	The solution is
	\begin{gather}
		x = \frac{k^2 - n^2}{2 m} = \frac{a_i}{2m},
	\\
		y = \pm\sqrt{k^2 - \left(\frac{m}{2}+x\right)^2} =
		\pm\sqrt{k^2 - \left(\frac{m}{2}+\frac{a_i}{2m}\right)^2} =
		\frac{\pm b_i \sqrt{q}}{2m},
	\end{gather}
	and the claim follows.
\end{proof}

\begin{definition}
	Let $M_1 M_2$ be an edge of $M\in \mathfrak{M}$.
	For $N\in\mathbb N_\pm$, $|n| < |M_1 M_2|$
	we will say that the set of points
	\begin{equation}
		\{M_0 : |M_0 M_1| - |M_0 M_2| = n\}
	\end{equation}
	is called the \emph{$n$-th Erdos curve}.
\end{definition}

Obviously, the 0-th Erdos curve is the perpendicular bisector of $M_1 M_2$,
and all the other Erdos curves are branches of cofocal hyperbolae.
On Figure~\ref{fig:Erdos_curves_numbered}, the Erdos curves are shown for an edge of length 3.

\begin{figure}
\center{\includegraphics[width=0.95\linewidth]{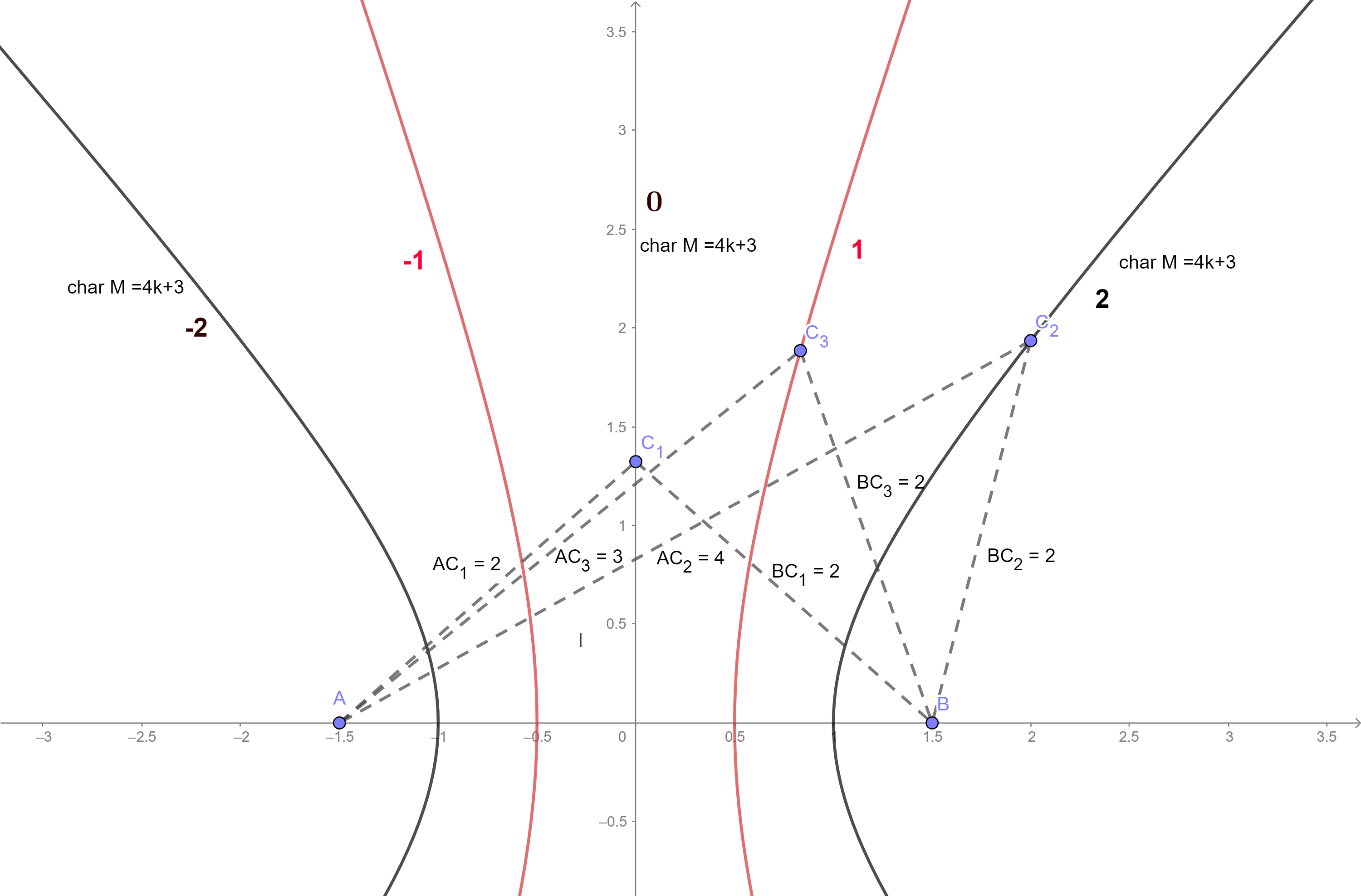}}
\caption{Erdos curves}
\label{fig:Erdos_curves_numbered}
\end{figure}

Thus, an edge $M_1 M_2$ generates $2|M_1 M_2| - 1$ Erdos curves.
For the sake of brevity, Erdos curves with odd numbers are named odd Erdos curves,
and the ones with even numbers are named even Erdos curves.

Two following definitions are used to classify integral point sets
with many collinear triples~\cite{avdeev2019particular}:

\begin{definition}
	A planar integral point sets of $n$ points with $n-1$ points on a straight line is called
	a \emph{facher} set.
\end{definition}
For $9\leq n\leq 122$, the minimal possible diameter is attained at a facher set~\cite{kurz2008bounds}.

\begin{definition}
	A planar integral point set situated on two parallel straight lines
	is called a \emph{rails} set.
\end{definition}

\begin{definition}
	The part of a plane between two parallel straight lines with distance $\rho$ between them
	is called a strip of width $\rho$.
\end{definition}

\begin{lemma}
	\cite{smurov1998stripcoverings}
	\label{lem:smurov_min_height}
	If a triangle $T$ with minimal height $\rho$ is situated in a strip,
	then the width of the strip is at least $\rho$.
\end{lemma}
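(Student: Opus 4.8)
The plan is to turn the geometric statement into a lower bound on the ``width in a direction'' of the triangle, and then to control that width by an area computation. Write the strip as $\{x\in\mathbb R^{2} : c_1 \le \langle x,u\rangle \le c_2\}$ for a unit normal $u$; its width is $c_2-c_1$. Since $T$ lies in the strip, the linear functional $\langle\,\cdot\,,u\rangle$ takes values in a subinterval of $[c_1,c_2]$ on $T$, so it is enough to show that
\begin{equation}
	w(u) := \max_{x\in T}\langle x,u\rangle - \min_{x\in T}\langle x,u\rangle \ \ge\ \rho
\end{equation}
for every unit vector $u$; then in particular $c_2-c_1 \ge w(u) \ge \rho$.

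First I would fix $u$ and, using that a linear functional on a triangle attains its extrema at vertices, label the vertices $A,B,C$ so that $\langle B,u\rangle \le \langle C,u\rangle \le \langle A,u\rangle$, whence $w(u)=\langle A,u\rangle-\langle B,u\rangle$. If two of these three numbers coincide, the corresponding side is orthogonal to $u$ and $w(u)$ equals the altitude dropped onto that side, which is $\ge\rho$ by the very definition of $\rho$; so assume the inequalities are strict. Then the line through $C$ orthogonal to $u$ strictly separates $A$ from $B$ and meets the open segment $AB$ in a point $D$. The cevian $CD$ splits $T$ into triangles $ACD$ and $BCD$ with common base $CD$, whose altitudes onto that base are $\langle A,u\rangle-\langle C,u\rangle$ and $\langle C,u\rangle-\langle B,u\rangle$. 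Adding areas,
\begin{equation}
	2\operatorname{Area}(T) = |CD|\,\bigl(\langle A,u\rangle-\langle B,u\rangle\bigr) = |CD|\cdot w(u).
\end{equation}

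To finish, I would use two elementary facts. First, $CD$ is a segment with both endpoints in the convex set $T$, so $|CD|$ is at most the diameter of $T$, which for a triangle is the length $a_{\max}$ of its longest side. Second, expressing $\operatorname{Area}(T)$ as half of base times height with $a_{\max}$ as the base identifies $2\operatorname{Area}(T)/a_{\max}$ with the \emph{shortest} altitude of $T$, that is with $\rho$. Hence
\begin{equation}
	w(u) = \frac{2\operatorname{Area}(T)}{|CD|} \ \ge\ \frac{2\operatorname{Area}(T)}{a_{\max}} \ =\ \rho ,
\end{equation}
which is what we needed. The argument is short, and the only point that deserves care is the treatment of the directions $u$ normal to a side of $T$ --- these are precisely the directions on which $w(u)$ attains its minimum $\rho$, so there the bound holds with equality and nothing is lost. (Alternatively, one could invoke the known fact that the minimal width of a convex polygon is realized by an edge--vertex antipodal pair, which for a triangle gives $\min_u w(u)=\rho$ directly; the area computation above is just a self-contained way to see this.)
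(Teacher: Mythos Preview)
Your proof is correct. The reduction to showing $w(u)\ge\rho$ for every unit $u$ is the right move, the cevian-through-$C$ construction cleanly yields $2\operatorname{Area}(T)=|CD|\cdot w(u)$, and the bound $|CD|\le a_{\max}$ together with $2\operatorname{Area}(T)=a_{\max}\cdot\rho$ finishes it. The edge case where $u$ is normal to a side is handled properly.

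As for comparison: the paper does not supply its own proof of this lemma. It simply quotes the statement with a citation to~\cite{smurov1998stripcoverings} and uses it as a black box later (in the proof of Theorem~\ref{thm:main_result}, to argue that no three points of $M\setminus\{M_1\}$ lie in a narrow strip). So your argument is not competing with anything in the text; it is a self-contained justification where the paper has none. The alternative you mention at the end --- that the minimal width of a convex polygon is attained at an edge--vertex antipodal pair, hence for a triangle equals the shortest altitude --- is exactly the fact one would expect a reference like~\cite{smurov1998stripcoverings} to contain, and your area computation is a direct elementary proof of that special case.
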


\begin{lemma}
	\cite[Lemma 4]{our-vmmsh-2018-translit};
	\cite[Lemma 2.4]{my-pps-linear-bound-2019}
	\label{lem:square_container}
	Let $M\in\mathfrak{M}(2,n)$, $\operatorname{diam} M = d$.
	Then $M$ is situated in a square of side length $d$.
\end{lemma}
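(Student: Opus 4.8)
The plan is to trap $M$ between two pairs of mutually perpendicular parallel lines, each pair at distance at most $d$, and then observe that the rectangle so obtained fits inside a square of side $d$. Concretely, I would fix an arbitrary pair of perpendicular unit directions $e_1, e_2$ in the plane and, for $i\in\{1,2\}$, consider the linear functional $\ell_i(P)=\langle P, e_i\rangle$ restricted to $M$ (viewing points as position vectors). Since $M$ consists of finitely many ($n$) points, $\ell_i$ attains a largest value $\ell_i^{\max}$ and a smallest value $\ell_i^{\min}$ on $M$; choose $P_i, Q_i\in M$ with $\ell_i(P_i)=\ell_i^{\max}$ and $\ell_i(Q_i)=\ell_i^{\min}$. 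Then all of $M$ lies in the strip bounded by the two lines $\{\ell_i=\ell_i^{\max}\}$ and $\{\ell_i=\ell_i^{\min}\}$, whose width is exactly $\ell_i^{\max}-\ell_i^{\min}$.

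The one computation needed is the estimate
\[
\ell_i^{\max}-\ell_i^{\min}=\langle P_i-Q_i,\,e_i\rangle\le |P_i-Q_i|\le \operatorname{diam} M=d,
\]
where the first inequality is Cauchy--Schwarz for the unit vector $e_i$ and the second is the definition of the diameter. Applying this for $i=1$ and $i=2$ shows that $M$ is contained in the intersection of two perpendicular strips of widths at most $d$, that is, in a rectangle (with sides parallel to $e_1,e_2$) whose two side lengths are each at most $d$. Enlarging each side of this rectangle up to length exactly $d$ produces a square of side $d$ containing $M$, which is the claim.

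I do not anticipate a genuine obstacle here: the statement reduces to the elementary fact that the width of a bounded planar set in any direction never exceeds its diameter, used simultaneously in two perpendicular directions. The only point deserving a remark is that the extrema of $\ell_1,\ell_2$ on $M$ are attained, which is automatic since $\#M=n<\infty$ (and in any case every IPS is finite, so one could pass to the closure of $M$ if one wanted the same conclusion for an arbitrary bounded set). If a phrasing closer to the paper's vocabulary is preferred, Lemma~\ref{lem:smurov_min_height}'s notion of a strip can be invoked verbatim: $M$ sits inside a strip of width $\le d$ for the direction $e_1$ and inside a strip of width $\le d$ for the direction $e_2$, and the intersection of two orthogonal strips of width $\le d$ is contained in a square of side $d$.
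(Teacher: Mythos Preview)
Your argument is correct: projecting $M$ onto two perpendicular directions and bounding each width by the diameter via Cauchy--Schwarz is exactly the standard way to place a bounded planar set inside an axis-aligned square of side $\operatorname{diam} M$. There is no gap.

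As for comparison, note that in this paper Lemma~\ref{lem:square_container} is merely \emph{cited} from~\cite{our-vmmsh-2018-translit,my-pps-linear-bound-2019} and stated without proof, so there is no in-paper argument to compare against. The proof appearing in those references is the same elementary observation you give (width of a finite point set in any direction is at most its diameter, applied in two orthogonal directions), so your write-up matches the intended approach.
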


\begin{definition}
	\cite[Definition 2.5]{my-pps-linear-bound-2019}
	A \textit{cross} for points $M_1$ and $M_2$, denoted by $cr(M_1,M_2)$, is the union of two straight lines:
	the line through $M_1$ and $M_2$,
	and the perpendicular bisector of line segment $M_1 M_2$.
\end{definition}

\begin{lemma}
	\cite[Theorem 3.10]{my-pps-linear-bound-2019}
	\label{lem:no_distance_one}
	Each set $M\in\mathfrak{M}_n$
	such that for some $M_1,M_2 \in M$ equality $|M_1 M_2|=1$ holds,
	consists of $n-1$ points, including $M_1$ and $M_2$, on a straight line,
	and one point out of the line, on the perpendicular bisector of line segment $M_1 M_2$.
\end{lemma}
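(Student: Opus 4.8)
The plan is to prove two facts and then combine them: (i) every point of $M$ lies on the line $\ell:=M_1M_2$ or on the perpendicular bisector $b$ of the segment $M_1M_2$; and (ii) at most one point of $M$ lies on $b$. Fact (i) is immediate from the triangle inequality: for any $M_0\in M\setminus\{M_1,M_2\}$ the numbers $|M_0M_1|$ and $|M_0M_2|$ are positive integers, and $\bigl|\,|M_0M_1|-|M_0M_2|\,\bigr|\le|M_1M_2|=1$, so this difference is $0$ or $1$. If it is $0$, then $M_0$ is equidistant from $M_1$ and $M_2$, i.e.\ $M_0\in b$. If it equals $1=|M_1M_2|$, then equality holds in the triangle inequality, which forces $M_0,M_1,M_2$ to be collinear, i.e.\ $M_0\in\ell$. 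Note that $\ell\cap b$ is the midpoint of $M_1M_2$, which cannot lie in $M$ since its distance to $M_1$ is $\tfrac12\notin\mathbb{N}_0$; hence for points of $M$ the two alternatives in (i) are mutually exclusive.

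For (ii) I would use the placement from the proof of the Grid Theorem: set $M_1=(-\tfrac12,0)$ and $M_2=(\tfrac12,0)$, so that $b$ is the $y$-axis. If $(0,h)\in M$, then necessarily $h\ne0$ and $\tfrac14+h^2=|(0,h)M_1|^2=k^2$ for some $k\in\mathbb{N}$, so $2|h|=\sqrt{4k^2-1}$; the radicand $4k^2-1=(2k)^2-1$ is never a perfect square (the only perfect square exceeding another perfect square by $1$ is $1$ itself), hence $\sqrt{4k^2-1}$ is irrational. Now suppose $M$ contained two distinct points $(0,h_1)$ and $(0,h_2)$ of $b$. Their distance $d$ is a positive integer equal to $\bigl|\,|h_1|\mp|h_2|\,\bigr|$ according to whether the two points lie on the same side of $\ell$ or on opposite sides. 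Substituting $2|h_i|=\sqrt{4k_i^2-1}$, isolating a single radical and squaring once yields $d\sqrt{4k_2^2-1}\in\mathbb{Z}$, which is impossible because $d\ge1$ and $\sqrt{4k_2^2-1}$ is irrational. Therefore $\#(M\cap b)\le1$.

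It remains to combine the pieces. Since $M$ is an integral point set, it contains a non-collinear triple, so not all points of $M$ lie on $\ell$. Together with (i) and (ii) this means exactly one point of $M$ lies off $\ell$; by (i) that point lies on $b$, and the other $n-1$ points --- including $M_1$ and $M_2$ --- lie on $\ell$, which is the assertion. I expect the only step requiring care to be (ii): one must isolate a single radical before squaring, since squaring the symmetric relation $\sqrt{4k_1^2-1}=2d\pm\sqrt{4k_2^2-1}$ directly leaves a product of two radicals whose irrationality is not immediate; everything else reduces to the triangle inequality and its equality case.
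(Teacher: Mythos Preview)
The paper does not supply its own proof of this lemma; it merely quotes the statement from \cite[Theorem 3.10]{my-pps-linear-bound-2019}. So there is nothing in the present paper to compare your argument against.

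That said, your proof is correct and self-contained. Step~(i) is exactly the standard Erd\H{o}s observation specialised to an edge of length~$1$: the only Erd\H{o}s curves are the perpendicular bisector ($n=0$) and the supporting line itself ($n=\pm1$, the degenerate equality case of the triangle inequality). Step~(ii) is the only place requiring work, and your computation is right: from $2|h_i|=\sqrt{4k_i^2-1}$ and $2d=\bigl|\sqrt{4k_1^2-1}\pm\sqrt{4k_2^2-1}\bigr|$, isolating one radical and squaring gives $d\sqrt{4k_2^2-1}=\pm(d^2+k_2^2-k_1^2)\in\mathbb{Z}$, forcing $4k_2^2-1$ to be a perfect square, which it is not since $(2k_2)^2-(4k_2^2-1)=1$ has no solution with $k_2\in\mathbb{N}$. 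Your remark about excluding the midpoint $\ell\cap b$ from $M$ and invoking the non-collinearity clause of Definition~\ref{def:IPS} to guarantee exactly one off-line point closes the argument cleanly.
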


Our attention will be mostly restricted to planar integral point sets
with characteristic of the form $4k+1$ and $4k+2$ in semi-general position.
Thus, if an IPS $M$ satisfies these conditions,
we will write that
$M\in\overline{\mathfrak{M}}'$, and also $M\in\overline{\mathfrak{M}}'_n$ for $n=\#M$.

\section{Integral Point Sets with Various Characteristics}

Let us demonstrate that the classes $4k+1$, $4k+2$ nor $4k+3$
are neither too exotic nor pathological.
In order to do this, we will provide important examples of IPS for each class.

For convenience, we use the notation \cite{our-ped-2018,our-pmm-2018,our-vmmsh-2018}:
$\sqrt{p}/q * \{ (x_1,y_1), ...,$ $ (x_n, y_n)  \}$,
which means that each abscissa is multiplied by $1/q$
and each ordinate is multiplied by $\sqrt{p}/q$,  i.e.
\begin{equation}
	\label{eq:char_lattice}
	\sqrt{p}/q * \{ (x_1,y_1), ..., (x_n, y_n)  \}
	=
	\left\{ \left(\frac{x_1}{q},\frac{y_1\sqrt{p}}{q}\right), ..., \left(\frac{x_n}{q},   \frac{y_n\sqrt{p}}{q}\right)  \right\}
	,
\end{equation}
where $q$ is the characteristic of the IPS.
Such notation is made possible by the Grid Theorem.

When we think about IPS with characteristic $4k+1$,
the very first example that comes to our mind is Egyptian triangle,
that is the triangle with sides $(3;4;5)$.
Egyptian triangle is obviously an IPS with characteristic $1 = 4 \cdot 0 +1$.

Facher sets with characteristic 1, that are called \emph{semi-crabs},
are investigated in~\cite{antonov2008maximal}.
There are also much more complex IPS with characteristic 1;
for those from $\dot{\mathfrak{M}}_7$, we refer the reader to~\cite{kurz2013constructing}.

Figure~\ref{fig:char385} shows a rails IPS of characteristic $385 = 4 \cdot 96 + 1$ presented in~\cite{avdeev2019particular}.
Its coordinates are
\begin{multline}
	\sqrt{385}/{2} * \{ (\pm 1105, 48),
	(\pm 2189 ; 0),
	(\pm 1587 ; 0),
	(\pm 1269 ; 0),
	\\
	(\pm 763 ; 0),
	(\pm 623 ; 0),
	(\pm 529 ; 0),
	(\pm 339 ; 0)\}
\end{multline}

\begin{figure}
\center{\includegraphics[width=0.95\linewidth]{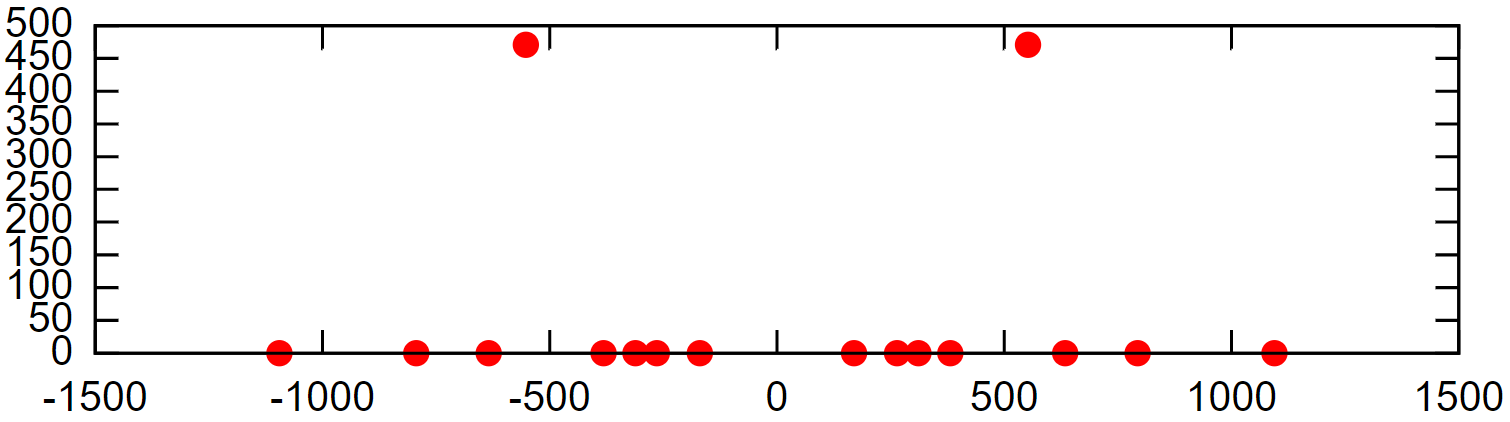}}
\caption{IPS of cardinality 16 and diameter 2189}
\label{fig:char385}
\end{figure}

In~\cite{kreisel2008heptagon}, the first ever known
planar integral point set $M_7 \in \dot{\mathfrak{M}}_7$ is given,
see Figure~\ref{fig:heptagon2}.
Its coordinates are
\begin{multline}
\sqrt{2002}/2227 * \{ (0;0), (2227^2\cdot10;0), (26127018;932064), (32 142 553; 411 864),
\\
	 (17615968;238464), (7 344 908;411 864), (19079044; 54168)\}
	 ,
\end{multline}
and
$
	\chr M_7 = 2002 = 4\cdot 500 + 2
$.

It is noticeable that the second example of an IPS from $\dot{\mathfrak{M}}_7$
given in the same article has the same characteristic.

Also the largest known rails IPS presented in~\cite{momot2022example}
with 104 points on one straight line and the rest 2 on another
(that gives the cardinality of 106) has the characteristic of $154 = 4 \cdot 38 + 2$.
(We do not list the coordinates of that set here due to its diameter which is 2745754098774581800288844387372160.)

\begin{figure}
\center{\includegraphics[width=0.95\linewidth]{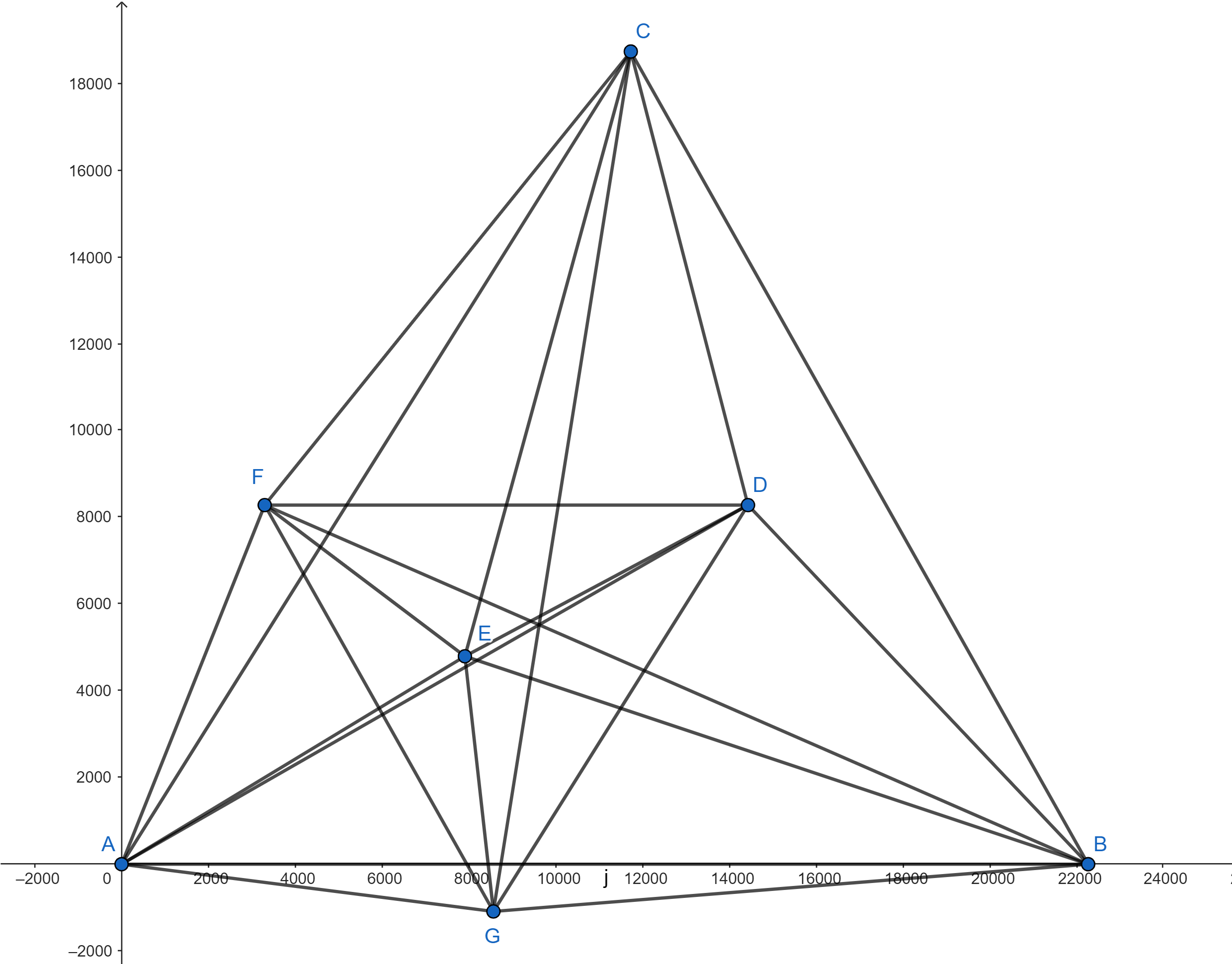}}
\caption{The heptagon in general position}
\label{fig:heptagon2}
\end{figure}

As for characteristic $4k+3$, we should mention that the \emph{upper} bound for the minimal diameter
of planar integral point set given in~\cite{harborth1993upper} employs IPS with characteristic $3$.
Moreover, all the IPS of minimal possible diameter provided in~\cite[\S 5, Figure 1]{harborth1993upper}
(for cardinalities from 3 to 9) have characteristic of form $4k+3$.

For the sake of completeness, on Figure~\ref{fig:11_2432_255255_a0a1b7f805cfc93080b4608505a0f45d_rails_3.png}
we give an example of rails set
with 3 points on one line and 8 points on the other (first presented in~\cite[Figure 1]{avdeev2019particular})
whose characteristic is
\begin{equation}
	255255 = 3 \cdot 5 \cdot 7 \cdot 11 \cdot 13 \cdot 17
	 = 4 \cdot 63813  + 3
\end{equation}
and whose coordinates are
\begin{multline}
\mathcal{P}_{3,8}=\sqrt{255255}/2*\{
	( 1767 ; -3);
	( 2791 ; -3);
	( 4071 ; -3);
\\
	( -306 ; 0);
	( 0 ; 0);
	( 1798 ; 0);
	( 2304 ; 0);
	( 2760 ; 0);
	( 3534 ; 0);
	( 4040 ; 0);
	( 4558 ; 0)
\}
\end{multline}

\begin{figure}
\center{\includegraphics[width=0.95\linewidth]{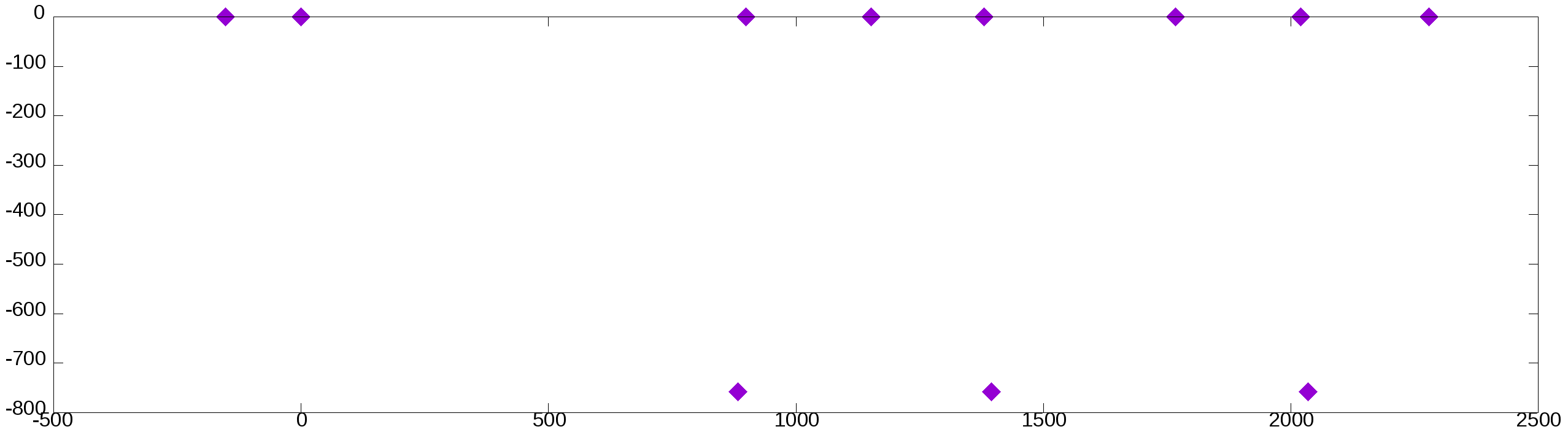}}
\caption{The rails set with characteristic 255255}
\label{fig:11_2432_255255_a0a1b7f805cfc93080b4608505a0f45d_rails_3.png}
\end{figure}

\section{Erdös Curves and Characteristic}

\begin{lemma}
	\label{lem:pre_weeding_I}
	Any set $M = \{A, B, C\} \in \mathfrak{M}_3$ with edge $|AB| = m$, where $|AC| - |BC| = m - s$, $m, s \in \mathbb{N}$, $s$ is an odd number and $s \leq m$, has a characteristic of the form $p = 4k + 3$, $k \in \mathbb{N}_0$.
\end{lemma}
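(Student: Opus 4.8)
My plan is to place the triangle on the Kemnitz grid using edge $AB$ and compute the characteristic directly from the coordinates of $C$. By the Grid Theorem, set $A = (-m/2, 0)$, $B = (m/2, 0)$, and write $|CA| = k$, $|CB| = \ell$ with $k, \ell \in \mathbb{N}$, $k + \ell \geq m$ (strict, since no collinearity is needed only if we want semi-general position — but the lemma only asks for $\mathfrak{M}_3$, so I keep $k+\ell \geq m$ and note the degenerate case separately). The hypothesis says $k - \ell = m - s$ with $s$ odd, $s \leq m$. From the two circle equations one gets $x_C = (k^2 - \ell^2)/(2m)$ and the squared height $h^2 = y_C^2 = k^2 - (m/2 + x_C)^2$. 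The area is $\tfrac12 m |y_C| = \tfrac12 m h$, so the characteristic is the squarefree part of $m^2 h^2$, equivalently of $h^2$ up to rational squares. Thus it suffices to show the squarefree part of $4m^2 h^2$ is $\equiv 3 \pmod 4$.

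The key computation is to get a clean arithmetic expression for $4m^2 h^2$. Writing $k = \ell + m - s$ and substituting, or more symmetrically using $k^2 - \ell^2 = (k-\ell)(k+\ell) = (m-s)(k+\ell)$, I would expand
\begin{equation}
4m^2 h^2 = 4m^2 k^2 - (m^2 + k^2 - \ell^2)^2 = \bigl(2mk - m^2 - k^2 + \ell^2\bigr)\bigl(2mk + m^2 + k^2 - \ell^2\bigr),
\end{equation}
and recognize the two factors as $\ell^2 - (m-k)^2 = (\ell - m + k)(\ell + m - k)$ and $(m+k)^2 - \ell^2 = (m + k - \ell)(m + k + \ell)$. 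So
\begin{equation}
4m^2 h^2 = (k + \ell - m)(k + m - \ell)(\ell + m - k)(k + \ell + m),
\end{equation}
which is Heron's formula in disguise. Now substitute $k - \ell = m - s$: the middle factor $k + m - \ell = 2m - s$, while $\ell + m - k = s$, and $k + \ell - m$, $k + \ell + m$ differ by $2m$. Set $t = k + \ell$; then $4m^2h^2 = s(2m-s)(t - m)(t + m)$. The parity analysis: $s$ is odd, hence $2m - s$ is odd, and $(t-m)(t+m) = t^2 - m^2$. Since $t - m$ and $t + m$ have the same parity, $t^2 - m^2$ is either odd or divisible by $4$; in either case its squarefree part is odd. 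So the squarefree part of $4m^2h^2$ is the squarefree part of $s(2m-s)(t^2 - m^2)$, a product of odd numbers, hence odd — it is $\equiv 1$ or $3 \pmod 4$.

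The remaining and genuinely delicate step is to rule out $\equiv 1 \pmod 4$. The squarefree part $q$ of $4m^2 h^2$ satisfies $4m^2 h^2 = q r^2$ for some rational $r$, in fact $2mh = \sqrt{q}\,r$ with $r$ rational; clearing denominators, $q$ is the squarefree part of the integer $s(2m-s)(t^2-m^2)$. For an odd integer $n$, the squarefree part of $n$ is $\equiv n \pmod 4$ only when... — more carefully, if $n = q r^2$ with $q$ squarefree and $r$ a positive integer, then since $r^2 \equiv 1 \pmod 8$ for odd $r$ (and $r$ is odd as $n$ is odd), we get $q \equiv n \pmod 8$, in particular $q \equiv n \pmod 4$. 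So I must show $s(2m-s)(t^2 - m^2) \equiv 3 \pmod 4$. Working mod $4$: $t^2 - m^2 \equiv 1 \pmod 4$ when $t \not\equiv m \pmod 2$, but here $t = k + \ell$ and $m \equiv k - \ell + s \equiv k + \ell + s \pmod 2$, so $t \equiv m + s \equiv m + 1 \pmod 2$, i.e. $t$ and $m$ have opposite parity, forcing exactly one of them even; then $t^2 - m^2 \equiv 1 \pmod 4$ (odd minus even-square, or even-square minus odd, giving $\pm 1 \equiv 1 \bmod 4$ up to sign — care needed, $0 - 1 \equiv 3$, so I will track this precisely: $t^2 - m^2 \equiv 1$ if the odd one squared is $1$ and we subtract $0$, $\equiv 3$ if reversed). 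The honest statement is $t^2 - m^2 \equiv \pm 1 \pmod 4$ depending on which of $t,m$ is odd. Likewise $s(2m-s) \equiv s \cdot (-s) \equiv -s^2 \equiv -1 \equiv 3 \pmod 4$ always, since $2m - s \equiv -s \pmod 4$... no: $2m - s \equiv -s \pmod 4$ only if $m$ even. So the clean invariant is $s(2m-s) \pmod 8$: $s(2m-s) = 2ms - s^2$; mod $8$ this depends on $m, s$. I expect the main obstacle to be exactly this bookkeeping — pinning down $s(2m-s)(t^2-m^2) \bmod 4$ (or mod $8$) using the single congruence $k - \ell \equiv m - s$, and confirming it always lands on $3$, not $1$. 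I would organize it as a short case split on the parities of $m$ and $s$ (noting $s$ odd already fixes half the cases), using throughout that $k,\ell$ range over integers with $k - \ell = m - s$ fixed, so the only free parity parameter is that of $k + \ell$, which is pinned to the opposite of $m$'s by the opposite-parity observation above. Once the mod-$4$ residue of the Heron product is shown to be $3$, the lemma follows from the $r^2 \equiv 1 \pmod 8$ fact for the odd cofactor.
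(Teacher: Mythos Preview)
Your approach is essentially the paper's: both reduce to the identity $pb^{2}=s(2m-s)(t-m)(t+m)$ with $t=k+\ell$ (the paper writes it as $s(2m+2n-s)(2n-s)(2m-s)$, which is the same product), and then argue this integer is $\equiv 3 \pmod 4$, so that $p\equiv 3\pmod 4$ via $b^{2}\equiv 1\pmod 4$ for odd $b$. The parity case split you outline on $m$ does close cleanly --- $m$ even forces $t$ odd, giving $s(2m-s)\equiv -s^{2}\equiv 3$ and $t^{2}-m^{2}\equiv 1$, while $m$ odd forces $t$ even, giving $s(2m-s)\equiv 2s-s^{2}\equiv 1$ and $t^{2}-m^{2}\equiv 3$ --- whereas the paper instead substitutes $s=2t'+1$ and brute-force expands to exhibit $pb^{2}+1\equiv 0\pmod 4$.
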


\begin{proof}
	Let $|BC| = n$. Then, from the triangle inequality $|AC| < |AB| + |BC|$, we have that $|AC| = m + n - s$, $s \in \mathbb{N}$ and $s \leq m$.
	By the Grid Theorem, we can assume that $A = (-\frac{m}{2}; 0)$, $B = (\frac{m}{2}; 0)$, and $C = (\frac{a}{2m}; \frac{b\sqrt{p}}{2m})$. We find the distance between points $A$ and $C$, and points $B$ and $C$ in coordinates, and form a system of equations
	\begin{equation}
		\label{eq:29}
		\begin{cases}
		\frac{\sqrt{(a+m^2)^2+pb^2}}{2m}&=m+n-s.\\
		\frac{\sqrt{(a-m^2)^2+pb^2}}{2m}&=n
		\end{cases}
	\end{equation}
	Multiply each equation by $2m$ and square both sides of the equations in system~\eqref{eq:29}
	\begin{equation}
		\label{eq:30_pre_weeding_I}
		\begin{cases}
		(a+m^2)^2+pb^2&=4m^2(m+n-s)^2,\\
		(a-m^2)^2+pb^2&=4m^2n^2
		\end{cases}
	\end{equation}
	and then subtract the second equation from the first one:
	\begin{equation}
	(a+m^2)^2-(a-m^2)^2=4m^2(m+n-s)^2-4m^2n^2,
	\label{eq: 31}
	\end{equation}
	$$
	4m^2a=4m^2((m+n-s)^2-n^2),
	$$
	so
	\begin{equation}
		\label{eq:32_pre_weeding_I}
		a=(m-s)(m+2n-s).
	\end{equation}
	Substitute the obtained expression~\eqref{eq:32_pre_weeding_I}
	into the second equation of the system~\eqref{eq:30_pre_weeding_I}:
	\begin{equation}
		((m-s)(m+2n-s)-m^2)^2+pb^2=4m^2n^2,
	\end{equation}
	that immediately gives
	\begin{equation}
		\label{eq:33}
		pb^2=s(2m+2n-s)(2n-s)(2m-s).
	\end{equation}
	Since $s$ is assumed to be odd, let $s=2t+1$, where $t\in\mathbb{N}_0$.
	Then equation \eqref{eq:33} can be rewritten as:
	\begin{equation}
		\label{eq:34.2}
		pb^2=(2t+1)(2m+2n-2t-1)(2n-2t-1)(2m-2t-1)
	\end{equation}
	or
	\begin{eqnarray}
		pb^2+1 &=&4(-4t^4+8t^3m+8t^3n-8t^3-4t^2m^2-12t^2mn+12t^2m- {}\nonumber\\
			 &     &-4t^2n^2+12t^2n-6t^2+4tm^2n-4tmn^2-12tmn+6tm- {}\nonumber\\
			 &     &-4tn^2+6tn-2t+2m^2n-m^2+2mn^2-3mn+m-n^2+n).
	\label{eq:35}
	\end{eqnarray}
	If equation~\eqref{eq:35} has integer solutions, then both $p$ and $b$ are odd.
	The right-hand side is divisible by 4, which implies that  $pb^2\equiv3\operatorname{mod}{4}$.
	Since $b^{2}\equiv1(\operatorname{mod}4)$, it follows that $p=4k+3$, $k\in \mathbb{N}_{0}$.
\end{proof}

The following theorem follows immediately.

\begin{theorem}[The Weeding Theorem I]
	\label{thm:weeding_I}
	Let $M = \{M_1, M_2, M_3\} \in \mathfrak{M}_3$ be an IPS of semi-general position with a characteristic different from $4k+3$, $k \in \mathbb{N}_0$. Then, the length of the edge $M_1 M_2$ and its $n$-th Erdős curve, which contains the point $M_3$, have the same parity.
\end{theorem}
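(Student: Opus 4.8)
The plan is to obtain the Weeding Theorem as the contrapositive of Lemma~\ref{lem:pre_weeding_I}, after normalising the sign of the Erdős curve index. Write $m = |M_1 M_2|$ and let $n$ with $|n| < m$ be the index of the Erdős curve through $M_3$, so that by definition $|M_3 M_1| - |M_3 M_2| = n$. Since interchanging the labels $M_1$ and $M_2$ replaces $n$ by $-n$ while changing neither $m$, nor the parity of $n$, nor the characteristic of $M$, I may assume without loss of generality that $n \ge 0$.

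Now I would argue by contradiction: suppose $m$ and $n$ have opposite parity, i.e.\ $m - n$ is odd, and put $s := m - n$. Then $s$ is odd, and because $0 \le n \le m-1$ one has $1 \le s \le m$, so $s \in \mathbb{N}$ and $s \le m$. Moreover $|M_1 M_3| - |M_2 M_3| = n = m - s$, which is precisely the hypothesis of Lemma~\ref{lem:pre_weeding_I} applied to $(A,B,C) = (M_1, M_2, M_3)$; note that $\{M_1, M_2, M_3\}$ is non-collinear since $M$ is in semi-general position, so it genuinely lies in $\mathfrak{M}_3$. Hence that lemma forces $\chr M = 4k + 3$ for some $k \in \mathbb{N}_0$, contradicting the assumption on the characteristic. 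Therefore $m - n$ is even, i.e.\ $m$ and the $n$-th Erdős curve have the same parity, as claimed.

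I do not expect any genuine obstacle here: once Lemma~\ref{lem:pre_weeding_I} is available, the theorem is essentially a reformulation of it. The only points needing a little care are (i) justifying that the normalisation $n \ge 0$ is legitimate and affects neither parity nor characteristic, and (ii) checking the side conditions $s \in \mathbb{N}$ and $s \le m$ of the lemma, both of which follow from $|n| < m$ together with $n \ge 0$. If one prefers to avoid the relabelling step, the case $n < 0$ can be handled directly by applying the lemma to $(A,B,C) = (M_2, M_1, M_3)$ with $s := m + n$, which is again odd (as $m+n$ and $m-n$ have the same parity), positive (as $n > -m$) and at most $m$ (as $n < 0$).
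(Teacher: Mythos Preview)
Your proposal is correct and follows essentially the same route as the paper: both arguments amount to the contrapositive of Lemma~\ref{lem:pre_weeding_I}, writing $n = m - s$ and using that the characteristic is not of the form $4k+3$ to force $s$ to be even. Your version is in fact a bit tidier, since you argue directly on the parity of $m-n$ rather than splitting into the two cases $m$ odd / $m$ even, and you make explicit the sign normalisation $n \ge 0$ (equivalently, the relabelling $(A,B,C) = (M_2,M_1,M_3)$ for $n<0$) that the paper leaves implicit.
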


\begin{proof}
	Let $|M_1 M_2| = m$, $m \in \mathbb{N}$, and let the $n$-th Erdős curve satisfy $|M_1 M_3| - |M_2 M_3| = n$.
	If $m$ is an odd number, then by Lemma~\ref{lem:pre_weeding_I}, the difference $n = |M_1 M_3| - |M_2 M_3|$ equals $m - s$, where $s$ is an even number, is also an odd number.
	Let $m$ be an even number; then $n = |M_1 M_3| - |M_2 M_3| = m - s$ is an even number, as a difference of two even numbers.
\end{proof}

The case when the length of an edge is even is slightly more specific and allows a more precise statement.

\begin{lemma}
	\label{lem:pre_weeding_II}
	Every set $M = \{A, B, C\} \in \mathfrak{M}_3$ with an even edge $|AB| = 2q$, where $|AC| - |BC| = 2q - s$, $q, s \in \mathbb{N}$, $s$ is an odd number and $s \leq 2q$, has a characteristic of the form $p = 8k + 7$, $k \in \mathbb{N}_0$.
\end{lemma}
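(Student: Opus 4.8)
The plan is to re-use the algebra already carried out in the proof of Lemma~\ref{lem:pre_weeding_I} and then push the divisibility analysis one further power of $2$. First I would observe that the derivation leading to equation~\eqref{eq:33}, namely
\begin{equation}
	pb^{2}=s\,(2m+2n-s)(2n-s)(2m-s)
\end{equation}
with $n=|BC|$ and $|AC|=m+n-s$, never used the parity of $m$; it is valid for an arbitrary edge length $m\in\mathbb N$. So I would simply substitute $m=2q$, note that $b\neq 0$ because $\{A,B,C\}$ is a genuine (non-collinear) triangle, and note that all four factors on the right are odd since $s$ is odd --- hence $p$ and $b$ are both odd, and in particular $b^{2}\equiv 1\pmod 8$.

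The core of the argument is then a congruence modulo $8$. I would group the right-hand side as $\bigl(s(2n-s)\bigr)\cdot\bigl((2m-s)(2m+2n-s)\bigr)$. Setting $u=2m-s=4q-s$, the second bracket equals $u(u+2n)=u^{2}+2nu=16q^{2}-8qs+s^{2}+8qn-2ns$, and because $m=2q$ the terms $16q^{2}$, $8qs$, $8qn$ all vanish modulo $8$; thus $(2m-s)(2m+2n-s)\equiv s^{2}-2ns\equiv-\,s(2n-s)\pmod 8$. Consequently $pb^{2}\equiv-\bigl(s(2n-s)\bigr)^{2}\pmod 8$, and since $s(2n-s)$ is a product of two odd integers its square is $\equiv 1\pmod 8$, so $pb^{2}\equiv-1\equiv 7\pmod 8$. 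Combining this with $b^{2}\equiv 1\pmod 8$ forces $p\equiv 7\pmod 8$, i.e.\ $p=8k+7$, $k\in\mathbb N_{0}$ (consistently refining the $p=4k+3$ of Lemma~\ref{lem:pre_weeding_I}, since $8k+7=4(2k+1)+3$).

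I do not anticipate a genuine obstacle: the only delicate point is the bookkeeping that kills the cross terms $16q^{2}$, $8qs$, $8qn$ modulo $8$, and this is exactly where the hypothesis ``the edge $|AB|$ is even'' is used --- it is the fact that $2m\equiv 0\pmod 4$ that upgrades the conclusion from mod $4$ to mod $8$. I would also spell out briefly why $2n-s\ge 1$ (from the strict triangle inequality $|AC|+|BC|>|AB|$, since $(2q+n-s)+n>2q$) so that every factor is a positive odd integer and the passage from $pb^{2}$ to $p$ modulo $8$ is legitimate.
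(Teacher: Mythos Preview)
Your proof is correct and follows the same overall plan as the paper: re-use equation~\eqref{eq:33} from Lemma~\ref{lem:pre_weeding_I}, specialise to $m=2q$, and upgrade the divisibility analysis from modulo~$4$ to modulo~$8$.

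The difference lies purely in how the mod-$8$ step is executed. The paper substitutes $s=2t+1$, fully expands the product $(2t+1)(4q+2n-2t-1)(2n-2t-1)(4q-2t-1)$ into a long polynomial, and then reads off that $pb^{2}+1$ equals $8(\cdots)-4n(n-1)$, invoking $n(n-1)\equiv 0\pmod 2$ to finish. Your argument instead groups the four factors as $\bigl(s(2n-s)\bigr)\cdot\bigl((4q-s)(4q+2n-s)\bigr)$ and observes, after a two-line expansion of the second bracket, that it is congruent to $-s(2n-s)\pmod 8$; hence $pb^{2}\equiv -\bigl(s(2n-s)\bigr)^{2}\equiv -1\pmod 8$. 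This decomposition is cleaner: it avoids the polynomial expansion entirely, makes the role of the hypothesis $m\equiv 0\pmod 2$ transparent (exactly the vanishing of $16q^{2}$, $8qs$, $8qn$), and sidesteps the auxiliary observation about $n(n-1)$. The paper's brute-force expansion, on the other hand, requires no insight beyond patience. Both land on $pb^{2}\equiv 7\pmod 8$ and then use $b^{2}\equiv 1\pmod 8$ in the same way.
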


\begin{proof}
	Let us set $m = 2q$ in the Lemma~\ref{lem:pre_weeding_I}.
	Then equation \eqref{eq:33} turns into
	\begin{equation}
		\label{eq:33.1}
		pb^2 = s(4q + 2n - s)(2n - s)(4q - s).
	\end{equation}
	Taking into account that $s = 2t + 1$, $t \in \mathbb{N}_0$,
	we bring equation \eqref{eq:33.1} to the form:
	\begin{equation}
	pb^2 = (2t + 1)(4q + 2n - 2t - 1)(2n - 2t - 1)(4q - 2t - 1)
	\label{eq: 34.2}
	\end{equation}
	or, after expansion,
	\begin{eqnarray}
		pb^2 + 1&=& 8(-2t^4+8t^3q+4t^3n-4t^3-8t^2q^2-12t^2qn+12t^2q- {}\nonumber\\
			 &     &-2t^2n^2+6t^2n-3t^2+8tq^2n-4tqn^2-12tqn+6tq- {}\nonumber\\
			 &     &-2tn^2+3tn-t+4q^2n-2q^2+2qn^2-3qn+q)-  {}\nonumber\\
			   &     &-4n(n-1).
		\label{eq:35.1}
	\end{eqnarray}
	Among two consecutive natural numbers,
	exactly one is even,
	so $4n(n-1)\equiv0\pmod{8}$.
	Thus, the right-hand side of equation~\eqref{eq:35.1} is a multiple of 8.
	Then $pb^2\equiv7\pmod{8}$.
	A necessary condition for the existence of an integer solution to equation~\eqref{eq:35.1}
	is that numbers $p$ and $b$ must be odd.
	Since $b^2\equiv1\pmod{8}$, it follows that $p=8k+7$, $k\in\mathbb{N}_0$.
\end{proof}

\begin{theorem}[The Weeding Theorem II]
	\label{thm:weeding_II}
	Let an IPS in semi-general position $M=\{M_1, M_2, M_3\}\in{\mathfrak{M}_{3}}$ have an even edge length $|M_1 M_2|$, and let point $M_3$ lie on an odd Erdős curve of edge $M_1 M_2$.
	Then the set $M$ has characteristic $p=8k+7$, $k\in \mathbb{N}_{0}$.
\end{theorem}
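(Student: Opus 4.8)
The plan is to obtain the Weeding Theorem~II as an immediate corollary of Lemma~\ref{lem:pre_weeding_II}, in exactly the same way the Weeding Theorem~I was deduced from Lemma~\ref{lem:pre_weeding_I}. No fresh arithmetic should be needed.

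First I would set $|M_1 M_2| = 2q$ with $q \in \mathbb{N}$ (possible since the edge length is even), and record the hypothesis that $M_3$ lies on an odd Erdős curve as $|M_1 M_3| - |M_2 M_3| = n$ for some odd $n$ with $|n| < 2q$; in particular $n \neq 0$. If $n < 0$, I would interchange the roles of $M_1$ and $M_2$: this does not affect the edge length, the characteristic of $M$, or the parity of the Erdős-curve index, and it replaces $n$ by $-n$. Hence I may assume $1 \le n \le 2q - 1$.

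Next I would put $s := 2q - n$. Since $2q$ is even and $n$ is odd, $s$ is odd; and $1 \le n \le 2q-1$ gives $1 \le s \le 2q-1$, so $s \in \mathbb{N}$ and $s \le 2q$. Then $|M_1 M_3| - |M_2 M_3| = n = 2q - s$, which is precisely the configuration of Lemma~\ref{lem:pre_weeding_II} applied with $A = M_1$, $B = M_2$, $C = M_3$. That lemma yields $\chr M = p = 8k + 7$ for some $k \in \mathbb{N}_0$, which is the assertion. (The semi-general-position hypothesis is automatic for a three-point IPS and is recorded here only for uniformity of language.)

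I do not expect a genuine obstacle: the whole arithmetic content — the expansion of $pb^2 + 1$ and the reduction modulo $8$, together with the observation that $4n(n-1)\equiv 0\pmod 8$, forcing $p$ and $b$ odd with $p\equiv 7\pmod 8$ — has already been carried out inside Lemma~\ref{lem:pre_weeding_II}. The only points deserving care are the reduction to a positive Erdős-curve index (legitimate because relabelling the endpoints of the edge preserves everything relevant and merely flips the sign of the index) and checking that $s = 2q-n$ lands in the admissible set $\{1,3,\dots,2q-1\}$ — but this set is exactly the collection of odd indices realised by an edge of length $2q$, so the correspondence $n \mapsto s$ is a bijection onto the hypotheses of the lemma, and every odd Erdős curve is covered.
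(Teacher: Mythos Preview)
Your proposal is correct and follows essentially the same route as the paper: both deduce the theorem directly from Lemma~\ref{lem:pre_weeding_II} by writing $|M_1M_2|=2q$ and matching the odd Erd\H{o}s-curve index $n$ with an odd $s=2q-n$. You are in fact a bit more careful than the paper, since you explicitly handle the possible negative sign of $n$ by swapping $M_1$ and $M_2$ and verify the range $1\le s\le 2q-1$, whereas the paper's proof leaves these points implicit.
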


\begin{proof}
	Indeed, let $|M_1 M_2|=2q$, $q \in \mathbb{N}$.
	For the $n$-th Erdős curve we have $|M_1 M_3|-|M_2 M_3|=n$.
	According to Lemma~\ref{lem:pre_weeding_II}, the difference $n=|M_1 M_3|-|M_2 M_3|=2q-s$,
	where $s$ is an odd number,
	is also an odd number and $\operatorname{char} M = 8k + 7$, $k\in \mathbb{N}_{0}$.
\end{proof}

\section{Auxiliary Results}

\begin{lemma}
	\label{lem:no_distance_2}
	Every set $M \in \overline{\mathfrak{M}_n}'$ with an edge $|M_1M_2|=2$ has a cardinality $n=3$.
\end{lemma}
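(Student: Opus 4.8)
The plan is to locate every point of $M$ other than $M_1$ and $M_2$ on the perpendicular bisector $\ell$ of the edge $M_1 M_2$, and then to show that $\ell$ can carry at most one point of $M$; together these give $\#M \le 3$, and the reverse inequality is automatic.

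First I would take an arbitrary $M_0 \in M \setminus \{M_1, M_2\}$. Since $M$ is in semi-general position, $M_0$ is not on the line $M_1 M_2$, so the triangle inequality applied to $\{M_1, M_2, M_0\}$ shows that the integer $n := |M_0 M_1| - |M_0 M_2|$ satisfies $|n| < |M_1 M_2| = 2$; hence $M_0$ lies on the $n$-th Erdős curve of $M_1 M_2$ for some $n \in \{-1, 0, 1\}$. Because $\operatorname{char} M$ is, by hypothesis, not of the form $4k+3$, the Weeding Theorem (Theorem~\ref{thm:weeding_I}) applies to $\{M_1, M_2, M_0\}$ and forces $n$ to have the same parity as the even number $|M_1 M_2| = 2$. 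The only even value available is $n = 0$, so $M_0$ lies on the $0$-th Erdős curve, which is exactly $\ell$.

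Next I would pin down how many points of $M$ can sit on $\ell$. Using the Grid Theorem (Theorem~\ref{tm:1}) I place $M_1 = (-1, 0)$ and $M_2 = (1, 0)$, so that $\ell = \{(0, y) : y \in \mathbb{R}\}$. A point $(0, y) \in M \cap \ell$ has $\sqrt{1 + y^2} = d$ for some $d \in \mathbb{N}$; the value $d = 1$ would put the point at the midpoint of $M_1 M_2$, contradicting semi-general position, so $d \ge 2$, and then $d^2 - 1$ lies strictly between the consecutive squares $(d-1)^2$ and $d^2$, hence is not a perfect square and $y = \pm\sqrt{d^2 - 1}$ is irrational. If two distinct points $M_3 = (0, y_3)$ and $M_4 = (0, y_4)$ of $M$ lay on $\ell$, with $y_i = \varepsilon_i\sqrt{d_i^2 - 1}$, $\varepsilon_i \in \{\pm 1\}$, $d_i \ge 2$, then $e := |M_3 M_4| = |y_3 - y_4|$ would be a positive integer; isolating one square root in $\varepsilon_3\sqrt{d_3^2 - 1} - \varepsilon_4\sqrt{d_4^2 - 1} = \pm e$ and squaring would yield an equation of the form $\pm 2e\,\varepsilon_4\sqrt{d_4^2 - 1} = d_3^2 - d_4^2 - e^2$, equating an irrational number with an integer --- impossible. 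Hence $\#(M \cap \ell) \le 1$.

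Combining the two steps, every point of $M$ beyond $M_1$ and $M_2$ lies in $M \cap \ell$, so $\#M \le 2 + \#(M \cap \ell) \le 3$, while $\#M \ge 3$ holds for every IPS; therefore $n = \#M = 3$. I expect the real work to be the first step, the elimination of the hyperbolic Erdős curves $n = \pm 1$: this is precisely where the restriction $\operatorname{char} M \ne 4k+3$ enters, via the Weeding Theorem, and without it the point $M_0$ could escape onto a hyperbola branch. The irrationality bookkeeping on $\ell$ in the second step is elementary; alternatively, one could dispose of a possible distance-$1$ edge by Lemma~\ref{lem:no_distance_one}, which would already collapse $M$ to three points.
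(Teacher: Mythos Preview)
Your proof is correct and follows the same two–step skeleton as the paper: first push every point other than $M_1,M_2$ onto the perpendicular bisector $\ell$ using a Weeding result, then show $\ell$ cannot carry two points of $M$. The first step is essentially identical (you invoke Theorem~\ref{thm:weeding_I}, the paper cites Lemma~\ref{lem:pre_weeding_II}; for an even edge these give the same conclusion). The second step is where the arguments diverge. The paper notes that two points $M_3,M_4\in\ell$ make the area of $M_1M_3M_4$ rational, whence $\operatorname{char} M = 1$; a second appeal to the Grid Theorem then forces $M_3=(0,t/4)$ with $t\in\mathbb{N}$, and the resulting Pythagorean relation $16s^2 - t^2 = 16$ is reduced to $(s-k)(s+k)=1$, which has no positive solutions. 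Your route is more elementary: you observe directly that any $(0,y)\in M\cap\ell$ has $y=\pm\sqrt{d^2-1}$ irrational for $d\ge 2$, and that a nonzero integer cannot equal a difference of two such surds. This bypasses both the characteristic-$1$ detour and the Diophantine equation, at the cost of losing the (not otherwise used) information that $\operatorname{char} M = 1$ whenever a fourth point exists.
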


\begin{proof}
	Let $M \in \overline{\mathfrak{M}_n}'$ and $M_1, M_2 \in M$.
	Then all points in the set $M$ lie on $cr(M_1, M_2)$.
	Otherwise, according to Lemma~\ref{lem:pre_weeding_II}, $\operatorname{char}{M}={8k+7}$, $k\in \mathbb{N}_0$.

	Suppose, to the contrary, that $M={M_1, M_2, M_3, M_4} \in \overline{\mathfrak{M}_4}'$.
	Employ the Grid Theorem and set $M_1(-1;0), M_2(1;0), O(0;0)$.
	Then $M_3$ and $M_4$ lie on the perpendicular bisector to $M_1M_2$.
	Since the distance $|M_3M_4|$ is an integer, the area of triangle $M_1M_3M_4$ is a rational number.
	By Definition~\ref{def:char}, this means that $\operatorname{char}{M}={1}$.
	Then by the Grid Theorem we have $M_3(0;t/4)$. Without loss of generality, let us assume that $t\in\mathbb{N}$. Let $|M_1M_3|=s$. Applying the Pythagorean theorem to triangle $OM_1M_3$, we get
	$$
	1+\frac{t^2}{16}=s^2,
	$$
	or equivalently,
	\begin{equation}
	\label{eq: 42}
	16s^2-t^2=16.
	\end{equation}
	Now, we find the solutions to equation~\eqref{eq: 42} in positive integers.
	To do this, we introduce the substitution: $t=4k$, $k\in\mathbb{N}$.
	Then equation~\eqref{eq: 42} takes the form:
	\begin{equation}
	\label{eq: 43}
	s^2-k^2=1.
	\end{equation}
	Break down the left-hand side of equation~\eqref{eq: 43} into factors:
	\begin{equation}
	\label{eq: 44}
	(s-k)(s+k)=1.
	\end{equation}
	Since $s,k\in\mathbb{N}$, the number 1 can be represented as the product of two integers in two ways: $1\cdot1$ and $-1\cdot(-1)$. In the first case, $(s-k)=1$ and $(s+k)=1$, which gives $s=1,k=0$, which contradicts that $k\in\mathbb{N}$.
	In the second case, $(s-k)=-1$ and $(s+k)=-1$, which gives $s=-1$, $k=0$, which also contradicts that $s,k\in\mathbb{N}$.

	Thus, equation~\eqref{eq: 42} has no positive integer solutions.
	The obtained contradiction completes the proof.
\end{proof}

The following result is a key element for bounds presented
in~\cite{solymosi2003note} and~\cite{my-semi-general-5-4-bound-2019}:

\begin{lemma}
	\cite[Observation 1]{solymosi2003note}
	If a triangle $T$ has integer side lengths $a \leq b \leq c$,
	then its minimal height $m$ is at least $\left(a - \frac{1}{4}\right)^{1/2}$.
\end{lemma}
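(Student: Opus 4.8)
The plan is to prove the bound on the minimal height directly from Heron's formula, exactly as in the cofocal-hyperbolae framework. Let $T$ have integer sides $a \le b \le c$, and let $m$ denote the height dropped onto the longest side $c$ (this is the minimal height, since the area $S$ is fixed and $m = 2S/c$ is smallest when the base is largest). Then $m = 2S/c$, so it suffices to bound $2S/c$ from below, i.e. to show $4S^2 \ge c^2\left(a - \tfrac14\right)$.

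First I would write Heron's formula in the form $16S^2 = (a+b+c)(-a+b+c)(a-b+c)(a+b-c) = \bigl((b+c)^2 - a^2\bigr)\bigl(a^2 - (b-c)^2\bigr)$. The second factor $a^2 - (b-c)^2 = (a-b+c)(a+b-c)$ is a product of two positive integers whose sum is $2c$, hence (by the strict triangle inequality and integrality) each factor is at least $1$; in particular $a^2 - (b-c)^2 \ge 1$, but more usefully I would keep it as is. The first factor satisfies $(b+c)^2 - a^2 \ge (b+c)^2 - b^2 = c(2b+c) \ge c \cdot (2a+c) \ge 3ac$ — actually the clean route is to use $b \ge a$ and $b+c > a$, giving $(b+c)^2 - a^2 = (b+c-a)(b+c+a) \ge 1 \cdot (b+c+a) \ge 2c$ when combined with $c \ge a$; but to land the constant $1/4$ one wants a sharper estimate. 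The intended manipulation is: $16 S^2 = \bigl((b+c)^2-a^2\bigr)\bigl(a^2-(b-c)^2\bigr)$, and since $b+c-a \ge 1$ and $a+b-c \ge 1$ while $a-b+c \ge 1$ and $a+b+c$ is bounded below appropriately, one extracts $16S^2 \ge 4c^2 a - c^2 = c^2(4a-1)$, which rearranges to $m^2 = 4S^2/c^2 \ge a - \tfrac14$, i.e. $m \ge (a-\tfrac14)^{1/2}$.

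More concretely, the key algebraic step I would carry out is to set $x = b - c$ (so $-a \le x \le a - 1$ as integers, with $x \le 0$ possible too) and write $16S^2 = \bigl((b+c)^2 - a^2\bigr)(a^2 - x^2)$, then minimize over the admissible integer configurations: for fixed $a$ and fixed perimeter the product is smallest at the extreme of the triangle inequality, which forces $a + b - c = 1$ or $b + c - a = 1$. Plugging in $b + c - a = 1$ gives $b + c + a = 2a+1$ and $a^2 - (b-c)^2 = (a+b-c)(a-b+c)$; with $b+c = a+1$ one gets $b - c$ ranging so that the product $(a+b-c)(a-b+c)$ has the two factors summing to $2c = a+1 - 2(b-c)\cdot 0$... — rather than push symbolic casework here, the clean finish is: $16S^2 = (b+c-a)(b+c+a)(a-b+c)(a+b-c) \ge 1 \cdot (a+b+c) \cdot (a-b+c)(a+b-c)$, and since $(a-b+c) + (a+b-c) = 2a$ with both positive integers, their product is at least $2a - 1 = 1\cdot(2a-1)$; meanwhile $a+b+c \ge a + c \ge 2a$ — collecting, $16S^2 \ge 2a(2a-1)$, hence $m^2 = 4S^2/c^2$, and one still needs $c \le a+b \le$ something. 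The honest assessment: the main obstacle is bookkeeping the worst case of the triangle inequality so that the base $c$ in the denominator is controlled simultaneously with the numerator $S$; the correct normalization is to divide through by $c^2$ at the outset, writing $m^2 = \dfrac{(b+c-a)(b+c+a)(a-b+c)(a+b-c)}{4c^2}$ and then checking that the minimum over integer triangles with $a \le b \le c$ and given $a$ is attained at a degenerate-looking thin triangle where $b = c$ and $a$ is the short side, yielding $m^2 = \dfrac{(2c-a)(2c+a)\cdot a \cdot a}{4c^2} = \dfrac{a^2(4c^2-a^2)}{4c^2} = a^2 - \dfrac{a^4}{4c^2} \ge a^2 - \dfrac{a^2}{4} \cdot \dfrac{a^2}{c^2}$, and since $c \ge a$ this is $\ge a^2 - \dfrac{a^2}{4} \ge a - \dfrac14$ for $a \ge 1$. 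That last chain — reducing to $b=c$, then using $c \ge a$ — is the crux, and everything else is routine expansion of Heron's formula.
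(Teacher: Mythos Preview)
The paper does not supply its own proof of this lemma (it is quoted from Solymosi), but it proves the sharper analogue, Lemma~\ref{lem:triangle_height_our}, by a method that also settles this one cleanly. Your proposal, by contrast, has a genuine gap at exactly the point you call ``the crux'': the claim that the minimum of $m$ over integer triangles with given smallest side $a$ is attained when $b=c$ is false. The extremal triangles are the \emph{thin} ones with $c=a+b-1$, not the isosceles ones with $b=c$. Concretely, for $a=2$ your $b=c$ analysis yields $m^2 \ge a^2 - a^2/4 = 3$, but the triangle $(a,b,c)=(2,2,3)$ has $m^2 = 7/4$, which is exactly $a-\tfrac14$ and strictly below $3$. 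So the reduction to $b=c$ picks the wrong extremal case, and none of the earlier sketches in your write-up actually lands the inequality $16S^2 \ge c^2(4a-1)$ either; that line is asserted, not derived.

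The clean route, and what the paper does in the parallel Lemma~\ref{lem:triangle_height_our}, is to write the height onto $c$ as
\[
m^2 \;=\; a^2 - \left(\frac{c^2+a^2-b^2}{2c}\right)^2
\]
and bound the subtracted term directly. Using $a\le b\le c$ and the integer triangle inequality $a+b\ge c+1$, one gets
\[
\frac{c^2+a^2-b^2}{c} \;=\; c - \frac{(b-a)(b+a)}{c} \;\le\; c - (b-a) \;\le\; 2a-1,
\]
so that $m^2 \ge a^2-\bigl(a-\tfrac12\bigr)^2=a-\tfrac14$. This is precisely the computation the paper performs with $a+b\ge c+2$ in place of $a+b\ge c+1$ to obtain the bound $(2a-1)^{1/2}$; dropping the extra hypothesis recovers Solymosi's original constant.
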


If we exclude characteristics of the form $4k+3$, this Solymosi's result can be sharpened slightly.

\begin{lemma}
	\label{lem:triangle_4k_plus_3}
	Any triangle with sides $a \leq b \leq c$, where $c=a+b-1$ and $a, b, c \in \mathbb{N}$,
	has the characteristic of the form $p=4k+3$, $k\in \mathbb{N}_0$.
\end{lemma}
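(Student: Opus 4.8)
The statement is a special case of Lemma~\ref{lem:pre_weeding_I}: set $c = a+b-1$, which in the notation of Lemma~\ref{lem:pre_weeding_I} corresponds to taking the edge $|AB| = m = a$ (or symmetrically $m=b$), the adjacent side $|BC| = n = b$, and the difference $|AC| - |BC| = c - b = a - 1 = m - 1$, i.e. $s = 1$. Since $s = 1$ is odd and $s = 1 \le m = a$, all hypotheses of Lemma~\ref{lem:pre_weeding_I} are met, and we conclude $p = 4k+3$, $k \in \mathbb{N}_0$. So the plan is simply to reduce to the already-proven lemma by identifying the right substitution.

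To make the reduction airtight, first I would fix which side plays the role of the edge $m$. The condition $c = a+b-1$ with $a \le b \le c$ forces the triangle degeneracy parameter to be minimal: $|AC| - |BC|$ should be computed so that it equals $m - s$ with $s$ odd. Writing $|AB| = a$, $|BC| = b$, $|AC| = c = a + b - 1$, the difference $|AC| - |BC| = a - 1$, so indeed $s = a - (a-1) = 1$. One must only check the side-length ordering is consistent with the triangle inequality: $a \le b \le c = a+b-1$ needs $a \ge 1$ (true, $a \in \mathbb{N}$) for $b \le a+b-1$, and $b \le c$ is immediate; also $c < a + b$ holds strictly, so the triangle is genuinely non-degenerate. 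Hence Lemma~\ref{lem:pre_weeding_I} applies verbatim.

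The only subtlety — and the single place where one should pause rather than wave hands — is that Lemma~\ref{lem:pre_weeding_I} is stated for a set $M = \{A,B,C\} \in \mathfrak{M}_3$, i.e. it presupposes the three points are non-collinear and the pairwise distances are integers. Here we are instead handed an abstract triangle with integer sides $a \le b \le c = a+b-1$. We must note that such a triangle is non-degenerate (strict triangle inequality, as checked above), so it does define a genuine point set in $\mathfrak{M}_3$, and therefore has a well-defined characteristic in the sense of Definition~\ref{def:char}. Once that bookkeeping is in place, the conclusion $p = 4k+3$ is exactly the conclusion of Lemma~\ref{lem:pre_weeding_I}, and nothing further is required; in particular no new computation with equation~\eqref{eq:33} is needed, since that computation is precisely what Lemma~\ref{lem:pre_weeding_I} already carried out for general odd $s$. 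I expect no real obstacle here — the work is entirely in the earlier lemma — so the write-up should be short: state the substitution $m \in \{a,b\}$, $s = 1$, verify $s$ odd and $s \le m$ and non-degeneracy, then invoke Lemma~\ref{lem:pre_weeding_I}.
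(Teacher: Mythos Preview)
Your proposal is correct and follows essentially the same approach as the paper: both reduce the statement directly to Lemma~\ref{lem:pre_weeding_I} by observing that $c = a + b - 1$ corresponds to the choice $s = 1$, which is odd and satisfies $s \le m$. Your vertex labeling (taking $|AB| = a$, $|BC| = b$, $|AC| = c$ so that $|AC| - |BC| = a - 1 = m - 1$) is in fact cleaner than the paper's, and your explicit check of non-degeneracy is a nice touch the paper omits.
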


\begin{proof}
	Indeed, consider the triangle $ABC$ and let $|BC|=a$, $|AC|=b$.
	Using the triangle inequality $|AB| < |BC|+|AC|$,
	we can represent the length of side $AB$ as $|AB|=a+b-s$, $s \in \mathbb{N}$ and $s \leq a$.
	For $s=1$, the conditions of Lemma~\ref{lem:pre_weeding_I} are satisfied: $s$ is an odd number and $s \leq a$.
	Therefore, the triangle with sides $a$, $b$, and $c=a+b-1$
	has a characteristic of the form $p=4k+3$, $k\in \mathbb{N}_0$.
\end{proof}

\begin{lemma}
	\label{lem:triangle_height_our}
	Let a triangle $ABC$ have the characteristic different from $4k+3$, $k\in \mathbb{N}_0$, with $a\leq b\leq c$. Then the smallest height of the triangle $ABC$ is at least $(2a-1)^{1/2}$.
\end{lemma}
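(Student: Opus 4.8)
The plan is to reduce the assertion to a purely elementary inequality among the side lengths, into which the hypothesis on the characteristic enters only through Lemma~\ref{lem:triangle_4k_plus_3}. First I would observe that the smallest height of $ABC$ is $h_c$, the one dropped onto the longest side: if $S$ denotes the area, then $2S=ah_a=bh_b=ch_c$, so $a\le b\le c$ forces $h_a\ge h_b\ge h_c$. Hence it suffices to prove $h_c^2\ge 2a-1$.

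Next I would use the identity $h_c^2=a^2-y^2$, where $y=a\cos B=\dfrac{a^2+c^2-b^2}{2c}$ is the distance from $B$ to the foot of the altitude from $C$ onto the line $AB$ (law of cosines, equivalently Heron's formula; it holds regardless of whether the triangle is obtuse). Since $c\ge b$, we get $a^2+c^2-b^2\ge a^2>0$, so $y>0$. The inequality $h_c^2\ge 2a-1$ is then the same as $a^2-y^2\ge a^2-(a-1)^2$, i.e. $y^2\le(a-1)^2$; together with $y>0$ this amounts to $0<y\le a-1$ (which incidentally forces $a\ge 2$). Clearing the positive denominator $2c$ and simplifying, $y\le a-1$ becomes
\[ (a+b-c)(b+c-a)\ge 2c. \]
So the whole statement comes down to this last inequality.

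Finally I would bring in the characteristic. The number $a+b-c$ is a positive integer by the triangle inequality, so $a+b-c\ge 1$; and if $a+b-c=1$, i.e. $c=a+b-1$, then Lemma~\ref{lem:triangle_4k_plus_3} would give the triangle a characteristic of the form $4k+3$, contrary to hypothesis. Hence $a+b-c\ge 2$. Since moreover $b\ge a$ yields $b+c-a\ge c$, we obtain $(a+b-c)(b+c-a)\ge 2\cdot c=2c$, which is exactly what was needed; unwinding the equivalences gives $h_c^2\ge 2a-1$, so the smallest height is at least $(2a-1)^{1/2}$.

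I do not anticipate a real obstacle here: the computation is short, and the only thing to get right is that the reduction lands precisely on $(a+b-c)(b+c-a)\ge 2c$, so that the characteristic restriction supplies exactly the extra factor $2$ that is required. It is worth noting that with only $a+b-c\ge 1$ the same argument gives $(a+b-c)(b+c-a)\ge c$, hence $y\le a-\tfrac12$ and $h_c^2\ge a-\tfrac14$, which is Solymosi's bound; excluding the characteristics $4k+3$ is precisely what upgrades $1$ to $2$ here and turns $a-\tfrac14$ into $2a-1$. (The triangle with sides $3,3,4$, of characteristic $5$, attains the new bound, so it is sharp.)
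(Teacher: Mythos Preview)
Your proof is correct and follows essentially the same route as the paper: both arguments express $h_c^2 = a^2 - y^2$ with $y = \dfrac{a^2+c^2-b^2}{2c}$, invoke Lemma~\ref{lem:triangle_4k_plus_3} to upgrade $a+b-c\ge 1$ to $a+b-c\ge 2$, and combine this with $b\ge a$ to bound $y\le a-1$. Your factorization $(a+b-c)(b+c-a)\ge 2c$ is in fact tidier than the paper's version, which substitutes the extremal value $c=a+b-2$ directly (and tacitly uses that $y$ is monotone in $c$); but the underlying idea is identical.
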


\begin{proof}
	By Lemma~\ref{lem:triangle_4k_plus_3}, a triangle with a side $c=a+b-1$ cannot have a characteristic different from $4k+3$, $k\in \mathbb{N}_0$. Therefore, in a triangle with integer sides, we have $a+b\geq c+2.$

	The height $h$ of the triangle $ABC$, dropped onto side $c$, can be found from the formula for its area: $S=hc/2$, which gives $h=2S/c$. To find the area of the triangle, we use the Heron's formula in the following form:
	$$
	S=\frac{1}{4} \sqrt{4a^2c^2-(c^2+a^2-b^2)^2}.
	$$
	Then
	\begin{multline}
		\label{eq:39}
		h^2=\left( \frac{2}{4c} \cdot \sqrt{4a^2c^2-(c^2+a^2-b^2)^2}\right)^2
		=
		\frac{1}{4c^2} \cdot \left( 4a^2c^2-(c^2+a^2-b^2)^2 \right)
		\\
		=
		a^2- \left(\frac{c^2+a^2-b^2}{2c} \right)^2
	\end{multline}

	From three heights of the triangle, the smallest one is that dropped onto its largest side.
	For fixed sides $a$ and $b$, let us set $c=a+b-2$. Then
	\begin{multline}
		\label{eq: 40}
		\frac{c^2+a^2-b^2}{c}
		=
		c+\frac{a+b}{c}(a-b) = c+\frac{c+2}{c}(a-b)
		=
		\\
		c+\left(1+\frac{2}{c}\right)(a-b) \leq c+a-b=2a-1
	\end{multline}
	and this equality is possible when $a=b$. We rewrite expression~\eqref{eq:39} as follows:
	$$
	h^2 \geq a^2- \left(\frac{2a-1}{2} \right)^2=a^2-(a-1)^2=a^2-a^2+2a-1=2a-1.
	$$

	Thus, the smallest height of the triangle with characteristic different from $4k+3$ is at least $(2a-1)^{1/2}$.
\end{proof}

\begin{corollary}
	\label{cor:sqrt_5_3}
	For the height of a triangle $ABC$, where ${A, B, C} \subset M \in \overline{\mathfrak{M}_4}'$, with sides $3 \leq a \leq b \leq c$, the following estimate holds:
	$$
		(2a-1)^{1/2} \geq \frac{\sqrt{5}}{\sqrt{3}} a^{1/2},
	$$
	where the difference between the left and right-hand sides increases with increasing $a$.
\end{corollary}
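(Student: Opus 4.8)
The plan is to peel off the geometric input with Lemma~\ref{lem:triangle_height_our} and then reduce everything to the elementary inequality $2a-1 \ge \tfrac{5}{3}a$. First I would observe that a set $M \in \overline{\mathfrak{M}_4}'$ has, by definition, characteristic of the form $4k+1$ or $4k+2$, hence \emph{not} of the form $4k+3$; therefore Lemma~\ref{lem:triangle_height_our} applies to the triangle $ABC$ and guarantees that its smallest height is at least $(2a-1)^{1/2}$. It then remains to establish the purely numerical statement: for real $a \ge 3$ one has $(2a-1)^{1/2} \ge \frac{\sqrt5}{\sqrt3}\, a^{1/2}$, and the difference of the two sides increases with $a$.

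For the inequality itself, since both sides are nonnegative when $a \ge 3$, I would square: the claim becomes $2a - 1 \ge \tfrac{5}{3}a$, i.e. $6a - 3 \ge 5a$, i.e. $a \ge 3$, which holds by hypothesis, with equality exactly at $a = 3$.

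For the monotonicity of the gap, I would put $f(a) = (2a-1)^{1/2} - \frac{\sqrt5}{\sqrt3}\, a^{1/2}$ and differentiate:
\[
f'(a) = \frac{1}{(2a-1)^{1/2}} - \frac{\sqrt5}{2\sqrt3\, a^{1/2}}.
\]
Both summands are positive, so $f'(a) > 0$ is equivalent, after squaring, to $\frac{1}{2a-1} > \frac{5}{12a}$, i.e. to $12a > 10a - 5$, which is trivially true. Hence $f$ is strictly increasing on $[3,\infty)$, and the gap between the two sides grows with $a$.

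The only point that needs care is the first sentence — checking that membership in $\overline{\mathfrak{M}_4}'$ genuinely rules out characteristic $4k+3$, so that Lemma~\ref{lem:triangle_height_our} is legitimately invoked; after that the argument is a one-line squaring. It may also be worth remarking, although it is not needed for the statement, that the hypothesis $a \ge 3$ is natural here, since edges of length $1$ or $2$ in a set from $\overline{\mathfrak{M}_4}'$ are excluded by Lemma~\ref{lem:no_distance_one} and Lemma~\ref{lem:no_distance_2}.
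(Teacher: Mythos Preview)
Your proof is correct and is exactly the argument the paper intends: the corollary is stated without proof, as an immediate consequence of Lemma~\ref{lem:triangle_height_our} together with the elementary inequality $2a-1 \ge \tfrac{5}{3}a$ for $a \ge 3$. Your verification of both the inequality (by squaring) and the monotonicity of the gap (by differentiation) is clean and complete; the only cosmetic slip is calling the two terms of $f'(a)$ ``summands'' when one is subtracted, but the reasoning that follows is sound since both terms are positive.
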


\section{The Main Bound}

The following lemma is a classical Erdos-style intersection-enumeration one,
empowered by our Weeding Theorem I.

\begin{lemma}
	\label{lem:intersection_enumeration}
	Let ${M_1, M_2, M_3, M_4} \subset M \in \overline{\mathfrak{M}_n}$ (points $M_2$ and $M_3$ may coincide, while the others are distinct), where $n \ge 4$. Then $\# M \le |M_1 M_2| \cdot |M_3 M_4| - 2$.
\end{lemma}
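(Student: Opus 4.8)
The plan is to run Erd\H{o}s's classical intersection--enumeration argument simultaneously on the two edges $M_1M_2$ and $M_3M_4$, and to use the Weeding Theorem~\ref{thm:weeding_I} to discard half of the Erd\H{o}s curves, which is exactly what upgrades Erd\H{o}s's crude $\approx 4\,|M_1M_2|\cdot|M_3M_4|$ estimate to the sharp one. Put $m=|M_1M_2|$ and $m'=|M_3M_4|$. Since $M$ has no collinear triple, every $P\in M\setminus\{M_1,M_2\}$ satisfies $\bigl||PM_1|-|PM_2|\bigr|<m$ and hence lies on exactly one Erd\H{o}s curve of $M_1M_2$; call its index $k_P$, and similarly let $j_P$ be the index of the unique Erd\H{o}s curve of $M_3M_4$ through $P$ (for $P\in M\setminus\{M_3,M_4\}$). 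First I would apply the Weeding Theorem~\ref{thm:weeding_I} to the triples $\{M_1,M_2,P\}$ and $\{M_3,M_4,P\}$ (the characteristic of $M$ is not of the form $4k+3$): it forces $k_P\equiv m\pmod 2$ and $j_P\equiv m'\pmod 2$. Hence at most $m-1$ of the $2m-1$ Erd\H{o}s curves of $M_1M_2$ are populated, and among those at most one is a straight line --- the perpendicular bisector, and only when $m$ is even; symmetrically for $M_3M_4$.

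Next I would bound the points of $M$ according to which pair of populated curves carries them. Collect the populated hyperbolic curves of $M_1M_2$ into the cofocal hyperbolas they branch off: there are $\lfloor(m-1)/2\rfloor$ of these, and $\lfloor(m'-1)/2\rfloor$ for $M_3M_4$. If $P\in M\setminus\{M_1,M_2,M_3,M_4\}$ lies on a hyperbola of each edge, then it lies on the intersection of two distinct conics --- distinct because a conic determines its focal pair and $\{M_1,M_2\}\ne\{M_3,M_4\}$ even in the case $M_2=M_3$ (there $M_1\ne M_4$) --- so that intersection, and therefore the whole batch of such $P$ with $|k_P|$ and $|j_P|$ prescribed, contains at most $4$ points by B\'ezout. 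If instead $P$ lies on a perpendicular bisector of one of the two edges, it lies on one fixed straight line, which by the no-three-collinear hypothesis carries at most $2$ points of $M$. Adding the at most $4$ vertices $M_1,M_2,M_3,M_4$ themselves (which sit on the degenerate index-$(\pm m)$ lines and so are not covered above), one obtains
\[
	\#M \;\le\; 4\left\lfloor\frac{m-1}{2}\right\rfloor\left\lfloor\frac{m'-1}{2}\right\rfloor + 4 + c,
\]
where $c\le 4$ accounts for the at most two populated perpendicular bisectors; in all parities the right-hand side is $(m-1)(m'-1)$ plus a bounded correction.

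Finally I would extract $\#M\le mm'-2$, i.e.\ $\#M\le(m-1)(m'-1)+(m+m'-3)$, from the previous display. Here Lemma~\ref{lem:no_distance_2} enters: it bans edges of length $2$, so each of $m,m'$ is either $3$ or at least $4$, and an odd edge contributes no populated perpendicular bisector; a short check of the four parity patterns then shows that the bounded correction is already at most $m+m'-3$ in every case \emph{except} $m=m'=3$ (two disjoint edges of length $3$, forcing $\#M=8$ while the claim asks for $7$), where it overshoots by exactly one. That residual case is the genuine obstacle, and I would close it separately: when $M_2=M_3$ only three vertices are wasted and the count already yields $7$; when $M_1,M_2,M_3,M_4$ are distinct one must rule the configuration out, for instance by noting that the two hyperbolas would then have four common points in $M$ while also carrying, respectively, $M_3,M_4$ and $M_1,M_2$ --- six points of $M$ on each conic --- which is incompatible with $M\in\overline{\mathfrak{M}}$ together with the characteristic restrictions of Section~4. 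Everything outside this last case is routine enumeration.
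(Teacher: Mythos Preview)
Your strategy is precisely the paper's: use the Weeding Theorem~I to cut the number of admissible Erd\H{o}s curves in half, then run the Erd\H{o}s--Solymosi intersection count with a three-way parity split on $(m,m')=(|M_1M_2|,|M_3M_4|)$. Your displayed bound $4\lfloor(m-1)/2\rfloor\lfloor(m'-1)/2\rfloor+4+c$ is exactly what the paper's three cases encode, and the even--even and even--odd reductions match theirs.

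You are in fact \emph{more} careful than the paper at one point. In the odd--odd case the honest count is $(m-1)(m'-1)+|\{M_1,M_2,M_3,M_4\}|$, and when the four vertices are distinct this equals $mm'-m-m'+5$; for $m=m'=3$ that is $8$, overshooting the target $mm'-2=7$ by one. The paper's Case~2 silently drops a ``$+1$'' when expanding $(m-1)(m'-1)+2+2$ (it writes $mm'-m-m'+4$ rather than $+5$) and so never sees the obstruction. You do see it, and you are right that the sub-case $M_2=M_3$ is harmless because then only three vertices are spent.

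The gap is your treatment of the remaining sub-case $m=m'=3$ with four distinct vertices. Your proposed closure --- that each of the two hyperbolas would carry six points of $M$, which is ``incompatible with $M\in\overline{\mathfrak{M}}$ together with the characteristic restrictions of Section~4'' --- is not a proof. General position only forbids four \emph{concircular} points; a hyperbola is not a circle, so six points of $M$ on a hyperbola violate nothing there. And the results of Section~4 (the Weeding Theorems) constrain only \emph{which} Erd\H{o}s curves may be populated, never how many points a single curve may carry. So nothing you invoke actually rules this configuration out, and the lemma as stated (for arbitrary edges, possibly disjoint) is left with the same one-point gap that the paper's arithmetic slip conceals. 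To genuinely close it you would need a separate argument specific to two disjoint length-$3$ edges, or else weaken the statement to edges sharing a vertex (which is what the application in Theorem~\ref{thm:main_result} can be arranged to use).
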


\begin{proof}
	We will distinguish three cases: two even edges, two odd edges and two edges with different parity.

	Let's consider the first case. Suppose both edges $M_1M_2$ and $M_3M_4$ have even lengths. Then, for each point $N\in M$, one of the following conditions is satisfied:

	a) $N$ belongs to $cr(M_1,M_2)$, which implies that there are no more than 4 points (no more than 2 on each of the lines);

	b) $N$ belongs to $cr(M_3,M_4)$, which implies that there are no more than 4 points (no more than 2 on each of the lines);

	c) $N$ belongs to the intersection of one of the $(|M_1M_2|/2 - 1)$ hyperbolae with one of the $(|M_3M_4|/2 - 1)$ hyperbolae, which implies that there are no more than $4(|M_1M_2|/2 - 1)(|M_3M_4|/2 - 1)$ points.

	Assuming that the edges $M_1M_2$ and $M_3M_4$ have even lengths,
	we infer that $|M_1M_2|\geq4$ and $|M_3M_4|\geq4$.
	Then,
	\begin{multline}
		4 \left(\frac{|M_1 M_2|}{2} - 1\right) \left(\frac{|M_3 M_4|}{2} - 1 \right) + 4 + 4
		=
		\\=
		 4\left(\frac{|M_1 M_2| \cdot |M_3 M_4|}{4} - \frac{|M_1 M_2|}{2} - \frac{|M_3 M_4|}{2}+1\right) + 4 + 4
		=
		\\=
		|M_1 M_2| \cdot |M_3 M_4| - 2 |M_1 M_2| - 2 |M_3 M_4| + 4 + 4 + 4
		\leq
		|M_1 M_2| \cdot |M_3 M_4| - 4 \\
		<
		|M_1 M_2| \cdot |M_3 M_4| - 2
		.
	\end{multline}

	Let's consider the second case. Suppose both edges $M_1M_2$ and $M_3M_4$ have odd lengths. Then, for each point $N\in M$, one of the following conditions is satisfied:

	a) $N$ belongs to $cr(M_1,M_2)$, which implies that there are no more than 2 points (otherwise, $\operatorname{char} M = 4k+3$);

	b) $N$ belongs to $cr(M_3,M_4)$, which implies that there are no more than 2 points;

	c) $N$ belongs to the intersection of one of $(|M_1 M_2|-1)/2$ hyperbolae with one of $(|M_3M_4|-1)/2$ hyperbolae, which implies that there are no more than $(|M_1 M_2|-1)(|M_3 M_4|-1)$ points.

	Assuming that the edges $M_1M_2$ and $M_3M_4$ have odd lengths,
	we infer that $|M_1M_2|\geq3$ and $|M_3M_4|\geq3$.
	Then,
	\begin{multline}
		(|M_1 M_2| - 1)(|M_3 M_4| - 1) + 2 + 2
		=
		|M_1 M_2| \cdot |M_3 M_4| - |M_1 M_2| - |M_3 M_4| + 4
		\leq \\
		\leq |M_1 M_2| \cdot |M_3 M_4| - 2
		.
	\end{multline}

	Let's consider the third case. Suppose the lengths of edges $M_1M_2$ and $M_3M_4$ are different. Without loss of generality suppose the edge $M_1M_2$ has even length while edge $M_3 M_4$ has odd length.
	Then, for each point $N\in M$, one of the following conditions is satisfied:

	a) $N$ belongs to $cr(M_1, M_2)$, which implies that there are no more than 4 points (no more than 2 on each of the lines);

	b) $N$ belongs to $cr(M_3,M_4)$, which implies that there are no more than 2 points (otherwise $\operatorname{char} M = 4k+3$);

	c) $N$ belongs to the intersection of one of $(|{M_1 M_2}|/2-1)$ hyperbolae with one of $(|M_3 M_4|-1)/2$ hyperbolae, which implies that there are no more than $4(|M_1 M_2|/2-1)(|M_3 M_4|-1)/2$ points.

	From the parity assumption for edges $M_1 M_2$ and $M_3 M_4$
	we can infer that $|M_1 M_2|\geq4$ and $|M_3M_4|\geq3$.
	Then,
	\begin{multline}
		4\left(\frac{|M_1 M_2|}{2} - 1\right) \left(\frac{|M_3 M_4| - 1}{2}\right) + 2 + 4
		=
		\\=
		(|M_1 M_2| - 2)(|M_3 M_4| - 1) + 6
		=
		\\=
		|M_1 M_2| \cdot |M_3 M_4| - |M_1 M_2| - 2 |M_3 M_4| + 2 + 6
		\leq
		|M_1 M_2| \cdot |M_3 M_4| - 2
		.
	\end{multline}

	This proves the assertion.
\end{proof}

The following function was introduced in~\cite{kurz2008minimum}
\begin{equation}
	\overline{d}(2,n) = \min_{M\in\overline{\mathfrak{M}_{n}}} \operatorname{diam} M
	,
\end{equation}
and some values were given:

\begin{eqnarray}
\label{d}
\overline{d}(2,3)&=& 1,{}\nonumber\\
\overline{d}(2,4)&=& 4,{}\nonumber\\
\overline{d}(2,5)&=&  \overline{d}(2,6)= 8,{}\nonumber\\
\overline{d}(2,7)&=& 33,{}\nonumber\\
\overline{d}(2,8)&=&  \overline{d}(2,9)= 56,{}\nonumber\\
\overline{d}(2,10)&=& ... =\overline{d}(2,12)= 105,{}\nonumber\\
\overline{d}(2,13)&=&  \overline{d}(2,14)= 532,{}\nonumber\\
\overline{d}(2,15)&=&  ... = \overline{d}(2,18) = 735,{}\nonumber\\
\overline{d}(2,19)&=& ... = \overline{d}(2,24) = 1995,{}\nonumber\\
\overline{d}(2,25)&=&  ... = \overline{d}(2,27)= 9555,{}\nonumber\\
\overline{d}(2,28)&=& 10672,{}\nonumber\\
\overline{d}(2,29)&=&  ... = \overline{d}(2,36) = 13975,{}\nonumber\\
\overline{d}(2,37) &>& 20\ 000.
\end{eqnarray}

Now we are finally ready to proof our main result.
It improves the bound~\eqref{eq:chebsb_lower_bound}
in our special case.

\begin{theorem}
	\label{thm:main_result}
	Let $M \in \overline{\mathfrak{M}}_n'$, i.e., $M$ is a set in semi-general position with characteristic different from $4k+3$, $k \in \mathbb{N}_0$.
	Then, for every integer $n \geq 3$, the inequality
	\begin{equation}
		\operatorname{diam} M \geq \left( \frac{25n}{36} \right)^{5/4}
	\end{equation}
	holds.
\end{theorem}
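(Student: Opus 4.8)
The plan is to establish the equivalent estimate $n < \tfrac{36}{25}\bigl(\operatorname{diam} M\bigr)^{4/5}$ for every $M\in\overline{\mathfrak M}_n'$; raising both sides to the power $5/4$ then gives the theorem. The finitely many small values $3\le n\le N_0$, with $N_0$ an explicit constant of order one hundred, I would dispose of directly: for those $n$ the minimal diameter of a set in $\overline{\mathfrak M}_n'$ is available from the exhaustive searches in the cited literature (and for the smallest $n$ one checks by hand --- e.g. the $(2,3,3)$-triangle, of characteristic $2$, already witnesses the bound at $n=3$). For $n>N_0$ the previously known estimate~\eqref{eq:chebsb_lower_bound}, valid for all sets in semi-general position, forces $d:=\operatorname{diam} M$ to exceed a fixed threshold $d_0$, and this is what lets the leading powers of $d$ absorb the additive constants appearing below. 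So fix such an $M$, use Lemma~\ref{lem:square_container} to enclose it in an axis-aligned square of side $d$, set $L:=\tfrac{6}{5}d^{2/5}$ (so that $L^2=\tfrac{36}{25}d^{4/5}$), and split into two cases according to whether $M$ has at least two, or at most one, edge of length $\le L$.

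In the first case pick two distinct edges $e_1,e_2$ of $M$ with $|e_1|,|e_2|\le L$ and apply the enumeration bound of Lemma~\ref{lem:intersection_enumeration} (whose proof uses only semi-general position and the exclusion of characteristic $4k+3$) to their at most four endpoints; if $e_1$ and $e_2$ share a vertex, that vertex plays the role of the coinciding pair $M_2=M_3$ permitted by the lemma, and the parity conditions on the shortest edges hold because Lemmas~\ref{lem:no_distance_one} and~\ref{lem:no_distance_2} force every edge of $M$ to have length at least $3$. This yields
\[
	n \;\le\; |e_1|\cdot|e_2| - 2 \;\le\; L^2-2 \;=\; \tfrac{36}{25}d^{4/5}-2 \;<\; \tfrac{36}{25}d^{4/5},
\]
which is exactly what is wanted.

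In the second case I would delete one endpoint of the unique short edge if there is one (and do nothing otherwise), obtaining a set $M'\subseteq M$ with $\#M'\ge n-1\ge 4$, still in semi-general position and still of characteristic $\ne 4k+3$, every edge of which has length $>L\ge 3$. Then every triangle of $M'$ has smallest side $a>L$, so by Lemma~\ref{lem:triangle_height_our} and Corollary~\ref{cor:sqrt_5_3} its minimal height exceeds $(2L-1)^{1/2}$, and by Lemma~\ref{lem:smurov_min_height} no three points of $M'$ lie in a strip of width $(2L-1)^{1/2}$. Projecting $M'$ orthogonally onto a side of the enclosing square, every interval of length $(2L-1)^{1/2}$ on that side carries at most two of the projected points, whence
\[
	n \;\le\; 2\left\lceil \frac{d}{(2L-1)^{1/2}} \right\rceil + 1 \;\le\; \frac{2d}{(2L-1)^{1/2}} + 3 \;=\; \sqrt{\tfrac{5}{3}}\,d^{4/5}\bigl(1+o(1)\bigr).
\]
Since $\sqrt{5/3}<\tfrac{36}{25}$, once $d>d_0$ this again gives $n<\tfrac{36}{25}d^{4/5}$. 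In either case $\tfrac{25n}{36}<d^{4/5}$, so $\operatorname{diam} M=d>\bigl(\tfrac{25n}{36}\bigr)^{5/4}$, as claimed.

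The part I expect to be delicate is the bookkeeping that makes the two cases meet precisely at the constant $\tfrac{36}{25}$: the product bound of Lemma~\ref{lem:intersection_enumeration} is what forces the choice $L=\tfrac{6}{5}d^{2/5}$, and one must then check that the strip bound of the second case beats $\tfrac{36}{25}$ with enough slack to absorb the additive term for all $d>d_0$ --- this interplay is what fixes $d_0$ and hence $N_0$, and a non-optimal choice here only enlarges the finite check. Two further points need care: verifying that Lemma~\ref{lem:intersection_enumeration} really applies to the two-short-edge configuration (the coinciding-vertex case and the parity hypotheses, both inherited from $\ell\ge 3$), and collecting reliable values for the minimal diameter of $\overline{\mathfrak M}_n'$ across the range $n\le N_0$ --- the only place where the argument leans on external computation.
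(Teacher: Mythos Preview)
Your proposal is correct and follows essentially the same route as the paper: the same dichotomy on whether two edges of length at most $\tfrac{6}{5}d^{2/5}$ exist, Lemma~\ref{lem:intersection_enumeration} in the first case, the strip-covering argument via Lemma~\ref{lem:triangle_height_our} and Lemma~\ref{lem:smurov_min_height} in the second, and the tabulated values of $\overline d(2,n)$ for small $n$. The only inessential differences are that the paper selects the two short edges canonically (the global minimum edge and the minimum edge of $M\setminus\{M_1\}$) and routes the height estimate through Corollary~\ref{cor:sqrt_5_3} to get strip width $\sqrt{2}\,d^{1/5}$ rather than your sharper $(2L-1)^{1/2}$; since both $\sqrt{5/3}$ and $\sqrt{2}$ lie below $\tfrac{36}{25}$, this does not affect the outcome.
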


\begin{proof}
	For $n = 3$, we have $\operatorname{diam} M \geq 3$ (achieved by an isosceles triangle with sides 2, 3, 3), and the assertion is obvious.

	Let us consider $M \in \overline{\mathfrak{M}_n}'$, $n \geq 4$, $\operatorname{diam} M = p$.

	Choose points $M_1, M_2, M_3, M_4 \in M$ (points $M_2$ and $M_3$ may coincide, while the others must be pairwise distinct) such that

	\begin{equation*}
		\min_{A,B \in M} |AB| = |M_1 M_2|
	\end{equation*}

	\begin{equation*}
		\min_{A,B \in M \setminus {M_1}} |AB| = |M_3 M_4| = m
	\end{equation*}

	If $m \leq \frac{6}{5}p^{2/5}$, then by Lemma~\ref{lem:intersection_enumeration},
	\begin{equation*}
		n \leq |M_1 M_2| \cdot |M_3 M_4| - 2 \leq \frac{36}{25}p^{4/5} - 2
	\end{equation*}
	or equivalently
	\begin{equation}
		p \geq \left(\frac{25(n+2)}{36}\right)^{5/4} \geq \left(\frac{25n}{36}\right)^{5/4}
	\end{equation}
	which is exactly the claim of the theorem.

	Now we have to consider $m > \frac{6}{5}p^{2/5}$.
	Then for any points $A, B \in M \setminus {M_1}$, we have $|AB| > \frac{6}{5}p^{2/5}$.
	By Corollary~\ref{cor:sqrt_5_3} and Lemma~\ref{lem:smurov_min_height},
	no three points from $M \setminus {M_1}$ lie in a strip of width
	\begin{equation}
		\frac{\sqrt{5}}{\sqrt{3}} \cdot \sqrt{\frac{6p^{2/5}}{5}} = \sqrt{2} \cdot p^{1/5}
		.
	\end{equation}

	By Lemma~\ref{lem:square_container}, the set $M$ lies in a square with side length $p$. We cover this square by $q$ strips, $\frac{p^{4/5}}{\sqrt{2}} \leq q < \frac{p^{4/5}}{\sqrt{2}} + 1$, such that the width of each strip does not exceed $\sqrt{2} \cdot p^{1/5}$.
	Each of the obtained strips contains no more than two points from $M \setminus {M_1}$, so
	\begin{equation}
		\label{eq:46}
		n \leq 2 \left( \frac{p^{4/5}}{\sqrt{2}} + 1\right) + 1=\frac{2p^{4/5}}{\sqrt{2}} +3 = \sqrt{2}p^{4/5}+3
		.
	\end{equation}

	From inequality~\eqref{eq:46}, we obtain
	\begin{equation}
		\label{eq:strips_5_4}
		p \geq \left( \frac{n -3}{\sqrt{2}} \right)^{5/4}
		.
	\end{equation}

	According to the results~\eqref{d}, for $3 \leq n \leq 36$ the theorem is true.
	Moreover, it is known that for all $37 \leq n \leq 7^4$, we have $\overline{d}(2, n) > 20\ 000$  and our estimate on the diameter is also true.
	Indeed, $\overline{d}(2, 7^4) \geq \left(\frac{25}{36}\cdot 7^4\right)^{5/4} \approx 10,655$.
	This estimate is weaker than the available numerical results. Therefore, from now on, we can assume that $n \geq 7^4$.

	For estimation~\eqref{eq:strips_5_4} and $n \geq 7^4$, we have
	\begin{equation}
		\label{eq:strips_5_4_7}
		p \geq \left( \frac{n -3}{\sqrt{2}} \right)^{5/4}  \geq \left( \frac{25n}{36} \right)^{5/4}
		.
	\end{equation}

	Thus, for any $n \geq 3$ the inequality $
		\operatorname{diam} M \geq \left( \frac{25n}{36} \right)^{5/4}
	$ holds.
\end{proof}

\section{Conclusion}
	The bound proved above (as well as~\eqref{eq:chebsb_lower_bound}) may appear to be far from precise values of $\overline{d}(2,n)$.
	However, it's easy to see that values of $d(2;n)$ tend to repeat often;
	thus, it is rather not unrealistic that the bounds may converge to the precise values.

	Also, we should notice that our approach and  specifically Weeding Theorems do not require semi-general position and can be applied to tighten the bound from~\cite{my-pps-linear-bound-2019}
	in the special case of characteristic $4k+1$ or $4k+2$.
	However, we are not ready to accept this tedious challenge yet.

	Another research area that is able to utilize our results fruitfully is the maximization of IPS.
	The Weeding Theorems I and II can sometimes make the exhaustive search up to 4 times faster.

\section{Acknowledgements}
The author thanks Dr. Prof. E.M. Semenov for the fruitful discussion and ideas,
Dr. A.S. Chervinskaia for the idea of using the term ``facher'' and proofreading,
Dr.~A.S.~Usachev for proofreading,
and E.A. Momot for the ``SciLexic'' project that helped the author with some word usage issues.



\end{document}